\documentclass[a4paper,12pt,reqno,english]{amsart}
\usepackage[utf8]{inputenc}
\usepackage[T1]{fontenc}
\usepackage{babel}
\usepackage{float}

\usepackage[bookmarks = true, colorlinks, citecolor = blue, urlcolor = blue]{hyperref}
\usepackage{thmtools}
\usepackage[capitalize, nameinlink]{cleveref}
\usepackage[titletoc, toc, title]{appendix}

\hypersetup{pdfstartview=}

\usepackage{amssymb}
\usepackage{tikz}
\usepackage{tikz-cd}

\numberwithin{equation}{section}
\numberwithin{figure}{section}

\theoremstyle{plain}
\newtheorem{theorem}{Theorem}[section]

\theoremstyle{plain}
\newtheorem*{theorem*}{Theorem}

\theoremstyle{plain}
\newtheorem{proposition}[theorem]{Proposition}

\theoremstyle{plain}
\newtheorem{lemma}[theorem]{Lemma}

\theoremstyle{plain}
\newtheorem{corollary}[theorem]{Corollary}

\theoremstyle{definition}
\newtheorem{definition}[theorem]{Definition}

\theoremstyle{definition}

\theoremstyle{definition}

\theoremstyle{remark}
\newtheorem{remark}[theorem]{Remark}

\theoremstyle{definition}

\addto\extrasenglish{
  
}

\newcommand{\lieg}{\mathfrak{g}}

\newcommand{\Uqg}{U_q(\mathfrak{g})}
\newcommand{\Uqu}{U_q(\mathfrak{u})}

\newcommand{\UqlS}{U_q(\mathfrak{l}_S)}
\newcommand{\UqkS}{U_q(\mathfrak{k}_S)}

\newcommand{\OqG}{\mathcal{O}_q(G)}
\newcommand{\OqU}{\mathcal{O}_q(U)}

\newcommand{\OqLevi}{\mathcal{O}_q(G / L_S)}
\newcommand{\Oqflag}{\mathcal{O}_q(U / K_S)}

\newcommand{\qalg}{\mathcal{B}}

\newcommand{\id}{\mathrm{id}}

\newcommand{\bbN}{\mathbb{N}}

\newcommand{\diff}{\mathrm{d}}

\newcommand{\Ricci}{\mathrm{Ricci}}

\newcommand{\liftmap}{\ell}

\newcommand{\liftqpm}{\ell^q_{+ -}}
\newcommand{\liftqmp}{\ell^q_{- +}}

\newcommand{\diffcalc}{\Omega_q}

\newcommand{\braidS}{\hat{S}}

\newcommand{\wedgemap}{\mathord{\wedge}}

\newcommand{\catone}{{}^A_B \mathbf{Mod}_B}
\newcommand{\cattwo}{{}^A \mathbf{Mod}_B}
\newcommand{\catthree}{{}^A_B \mathbf{mod}_0}
\newcommand{\catfour}{{}^H \mathbf{mod}}
\newcommand{\catfive}{{}_{\Oqflag} \mathbf{mod}_{\Uqu}}
\newcommand{\catsix}{\mathbf{mod}_{\UqkS}}

\begin{document}

\title[The Einstein condition for quantum irreducible flag manifolds]{The Einstein condition for \\ quantum irreducible flag manifolds}

\author{Marco Matassa}

\address{Department of Computer Science, OsloMet – Oslo Metropolitan University, Oslo, Norway}

\email{marco.matassa@oslomet.no}

\begin{abstract}
We show that any quantum irreducible flag manifold satisfies an analogue of the Einstein condition, expressing proportionality between the Ricci tensor and the metric, at least in a small open interval around the classical value of the quantization parameter.
This makes use of various canonical constructions associated to these algebras, such as differential calculi and bimodule connections, which were previously introduced by various authors.
\end{abstract}

\maketitle

\section{Introduction}
\label{sec:introduction}

In differential geometry, a Riemannian manifold is said to satisfy the \emph{Einstein condition} if the Ricci tensor is proportional to the metric.
Such manifolds are the Riemannian counterparts of the solutions to Einstein's field equations for pure gravity and with cosmological constant.
For mathematical results concerning this topic we refer to \cite{besse}.

It is also possible to study an analogue of the Einstein condition in the context of \emph{non-commutative geometry}, where spaces are replaced by non-commutative algebras, which are to be considered as consisting of ``functions'' on quantum spaces.
More specifically, here we will employ the framework of \emph{quantum Riemannian geometry} as presented in \cite{quantum-book} (for an alternative approach in terms of spectral triples \cite{connes}, see for instance \cite{mesland-rennie}).
Our goal will be to establish a quantum analogue of the Einstein condition for a particular class of quantum spaces which we review below, the \emph{quantum irreducible flag manifolds}.

First of all, to any complex semisimple Lie algebra $\lieg$ we can associate the corresponding \emph{quantized enveloping algebra} $\Uqg$, as described for instance in \cite{klsc}.
Here $q$ is a real number which we refer to as the \emph{quantization parameter}. For the value $q = 1$ we recover the underlying classical object, namely the enveloping algebra $U(\lieg)$ (in a suitable sense).
Dually we can introduce the \emph{quantized coordinate ring} $\OqG$, which is a non-commutative algebra quantizing the functions on the Lie group $G$.
In this setting, quantum homogeneous spaces can be naturally defined by invariance with respect to various subalgebras of $\Uqg$ (or more generally coideals).
One notable class is given the quantum irreducible flag manifolds $\Oqflag$, which enjoy certain special properties which we describe next.

Given a non-commutative algebra $A$, we can introduce a notion of \emph{differential calculus} over $A$, which should be a differential graded algebra $\Omega^\bullet$ with various properties.
This is not canonically defined for an arbitrary algebra, in general.
However there is a canonical choice for any quantum irreducible flag manifold, which has been described by Heckenberger and Kolb in the papers \cite{locally-finite, heko}.
These \emph{Heckenberger--Kolb calculi}, denoted by $\diffcalc^\bullet$ in the following, are covariant with respect to the natural quantum group actions and are of classical graded dimension.
As such, the class of quantum irreducible flag manifolds with their differential calculi provide a natural testing ground for the development of ideas in the context of non-commutative geometry.
Even more specifically, they fit into the framework of non-commutative Kähler geometry introduced in \cite{kahler-structures}, as shown in \cite{kahler}.

Once we have a fixed a differential calculus $\Omega^\bullet$ over an algebra $A$, a \emph{quantum metric} is an element $g \in \Omega^1 \otimes_A \Omega^1$ satisfying an appropriate invertibility condition.
In the case of quantum irreducible flag manifolds, such quantum metrics were classified in \cite{levicivita-irreducible} under some natural additional conditions (such as covariance, symmetry and reality).
The outcome of this investigation is that there is a \emph{unique} (up to a scalar) quantum metric, which is then the canonical choice to investigate the Einstein condition in this setting.

Next, we can introduce connections on appropriate $A$-bimodules with respect to a fixed choice of differential calculus $\Omega^\bullet$.
In favorable cases these turn out to be \emph{bimodule connections}, which enjoy nicer properties (in the commutative setting any connection is automatically of this type).
For example, we can take the tensor product of such connections to obtain a new bimodule connection, a property that can be used to formulate a notion of \emph{compatibility} with a quantum metric.
The main result of \cite{levicivita-irreducible} is that, for any quantum irreducible flag manifold, there is a \emph{unique} connection $\nabla$ compatible with the fixed quantum metric and which is torsion-free.
This is the analogue of the \emph{Levi-Civita connection} in the classical setting, which is of central importance in Riemannian geometry.

At this point we have described most of the ingredients that are needed to formulate the Einstein condition in the non-commutative setting.
The final one is the Ricci tensor, which is defined in \cite[Section 8.1]{quantum-book} as a certain element $\Ricci_\ell \in \Omega^1 \otimes_A \Omega^1$ in terms of a given connection $\nabla$.
Here $\ell$ denotes a choice of \emph{lifting map} for the calculus, which is an extra ingredient that is necessary to make sense of this definition in the quantum setting.

With these ingredients in place, we will say that an algebra $A$ with quantum metric $g$ satisfies the \emph{Einstein condition} if $\Ricci_\ell = \lambda g$ for some scalar $\lambda$ (of course this is with respect to the fixed data of a differential calculus $\Omega^\bullet$, a connection $\nabla$ and a choice of lifting map $\ell$).
Having briefly described our setting, we can now state our main result of the paper.

\begin{theorem*}
Let $\Oqflag$ be a quantum irreducible flag manifold with the quantum metric $g$ as described above.
Then there exists a lifting map $\ell$ such that $\Oqflag$ satisfies the Einstein condition in an open interval around the classical value $q = 1$.
\end{theorem*}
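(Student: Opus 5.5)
The strategy is to reduce the Einstein condition to a statement about covariant maps between finite-dimensional $\UqlS$-modules. We then use a Schur-type uniqueness argument, together with continuity in $q$, to pin down the proportionality.

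First recall the definition of \cite[Section 8.1]{quantum-book}. Let $R_\nabla \colon \Omega^1 \to \Omega^2 \otimes_A \Omega^1$ be the curvature of the Levi-Civita connection, and let $\ell \colon \Omega^2 \to \Omega^1 \otimes_A \Omega^1$ be a bimodule lifting map with $\wedgemap \circ \ell = \id$. Then
\[
\Ricci_\ell = \bigl((\cdot,\cdot) \otimes \id\bigr)(\id \otimes \ell \otimes \id)(\id \otimes R_\nabla)(g).
\]
Because $g$, $\nabla$ and the Heckenberger--Kolb calculus $\diffcalc^\bullet$ are all covariant, we choose $\ell$ covariant as well. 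For instance, take $\ell$ to be the inverse of $\wedgemap$ restricted to the quantum antisymmetric part of $\Omega^1 \otimes_{\Oqflag} \Omega^1$, defined as the $(-q^{-2})$- or $(-1)$-eigenspace of the braiding $\braidS$. Alternatively, build it from $\liftqpm$ and $\liftqmp$ on the holomorphic/antiholomorphic decomposition $\Omega^{1,0} \otimes \Omega^{0,1}$. With this choice $\Ricci_\ell$ is a left $\OqU$-covariant element of $\Omega^1 \otimes_{\Oqflag} \Omega^1$ that is invariant under the action.

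By Takeuchi's equivalence between relative Hopf modules and $\UqlS$-modules, invariant elements of $\Omega^1 \otimes \Omega^1$ correspond to $\UqlS$-invariants in $V \otimes V$, where $V = V^{(1,0)} \oplus V^{(0,1)}$ is the cotangent fibre. For irreducible flag manifolds, $V^{(1,0)}$ and $V^{(0,1)}$ are irreducible and dual to each other. So the invariant space is two-dimensional, spanned by the invariant in $V^{(1,0)} \otimes V^{(0,1)}$ and the one in $V^{(0,1)} \otimes V^{(1,0)}$. Hence $\Ricci_\ell = a(q)\, g_{+-} + b(q)\, g_{-+}$, while $g = g_{+-} + c(q)\, g_{-+}$ for a fixed $c(q)$ coming from the symmetry condition of \cite{levicivita-irreducible}.

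The Einstein condition thus becomes the single scalar equation $b(q) = c(q)\, a(q)$. The degree of freedom in $\ell$ is exactly what we use to achieve it. Covariant lifts are determined up to adding a covariant map $\Omega^2 \to \ker \wedgemap$. Mixing the two natural lifts gives, for example, $\ell_t = t\,\liftqpm + (1-t)\,\liftqmp$, which is a one-parameter family with $\wedgemap \circ \ell_t = \id$. The Ricci coefficients $a, b$ then depend affinely on $t$, and we choose $t = t(q)$ to solve $b = c a$.

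The main obstacle is showing that this linear equation is solvable, i.e.\ that the coefficient of $t$ is nonzero. The plan is to argue by continuity at $q = 1$. Classically, compact Hermitian symmetric spaces are Kähler--Einstein, $\ell$ is the standard antisymmetrization, the equation holds at $t = 1/2$, and its $t$-derivative is proportional to the curvature contribution. This contribution is nonzero because the holomorphic sectional curvature of $U/K_S$ does not vanish, so the Ricci contraction depends genuinely on how $\Omega^2$ is lifted.

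All coefficients are rational, hence continuous, functions of $q$. This follows from the explicit constructions of $\nabla$ and of the $R$-matrix-based maps in \cite{levicivita-irreducible, heko}. Nonvanishing of the $t$-coefficient at $q = 1$ therefore persists on an open interval around $1$, and there we solve for $t(q)$. The delicate step I expect is computing the classical $t$-coefficient explicitly, in particular verifying that it is nonzero uniformly across all irreducible flag manifolds. I would attempt this via the Casimir of $\mathfrak{k}_S$ acting on $V^{(1,0)}$.
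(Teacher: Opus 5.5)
\textbf{There is a genuine gap.} Your architecture matches the paper's: the covariant lifts $\liftqpm,\liftqmp$, the affine family $\ell_t = t\,\liftqpm + (1-t)\,\liftqmp$, Takeuchi plus Schur to reduce everything to coefficients of $g_{+-}$ and $g_{-+}$, and continuity from $q=1$.

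However, the step you yourself call the main obstacle, nonvanishing of the $t$-coefficient, is not proved. The appeal to nonvanishing holomorphic sectional curvature is not an argument. The $t$-coefficient is governed by the difference $\Ricci_{\liftqpm} - \Ricci_{\liftqmp}$, i.e.\ by a contraction against the \emph{symmetrized} lift $x \wedge y \mapsto x \otimes y + y \otimes x$. The classical identity $\Ricci_{\ell_{1/2}} = \lambda g$ alone says nothing about that difference, and the Casimir computation is only announced. There is also a definitional gap: $\liftqpm$ and $\liftqmp$ are defined only on $\diffcalc^{(1,1)}$. So $\Ricci_{\ell_t}$ is not even defined until you know the curvature has no $(2,0)$ or $(0,2)$ part.

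The missing idea fixes both problems at once and makes the computation unnecessary. The Levi-Civita connection is the sum of the Chern connections on $\diffcalc^+$ and $\diffcalc^-$. Hence $R_\nabla(\diffcalc^\pm) \subseteq \diffcalc^{(1,1)} \otimes_\qalg \diffcalc^\pm$ (\cref{prop:curvature-chern}). The inverse metric also vanishes on $\diffcalc^+ \otimes_\qalg \diffcalc^+$ and on $\diffcalc^- \otimes_\qalg \diffcalc^-$. Therefore only $g_{-+}$ contributes to $\Ricci_{\liftqpm}$ and only $g_{+-}$ contributes to $\Ricci_{\liftqmp}$. This gives $\Ricci_{\liftqpm} = a\, g_{-+}$ and $\Ricci_{\liftqmp} = b\, g_{+-}$ (\cref{lem:ricci-lifting}).

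With your normalization $g = g_{+-} + c\, g_{-+}$, where $c \neq 0$ by nondegeneracy, the Einstein condition for $\ell_t$ reads $t\,a = c\,(1-t)\,b$. Its $t$-coefficient is $a + c\,b$. At $q=1$ the classical lift is $\ell_{1/2}$, and $\Ricci_{\ell_{1/2}} = \lambda g$ forces $a(1) = 2c\lambda$ and $b(1) = 2\lambda$. Hence $a(1) + c\,b(1) = 4c\lambda \neq 0$, simply because the classical Einstein constant of a compact homogeneous Kähler manifold is nonzero. No curvature or Casimir computation is needed, and continuity in $q$ then finishes as you planned.

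Two minor points:
\begin{itemize}
\item ``Eigenspaces of $\braidS$'' are not meaningful, since $\braidS$ maps $V^{(1,0)}\otimes V^{(0,1)}$ to $V^{(0,1)}\otimes V^{(1,0)}$ and is not an endomorphism.
\item Two-dimensionality of the invariants needs $V^{(1,0)} \not\cong V^{(0,1)}$, not just that they are dual to each other.
\end{itemize}
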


This extends previous results obtained in this setting, such as the one for the quantum $2$-sphere from \cite[Section 8.2.3]{quantum-book} and its generalization to all quantum projective spaces from \cite{matassa-riemannian}.
However it should be pointed out that the cited results hold for any admissible value of $q$, rather than in just a neighborhood of the classical value $q = 1$.
Hence it remains an open problem to show whether this is the case for all quantum irreducible flag manifolds or not (this and more related problems are discussed in the last section).

The content of the paper is as follows.
In \cref{sec:preliminaries} we present in more detail the setting introduced above and recall some preliminary results we will need.
In \cref{sec:metrics-connections} we review the results from \cite{levicivita-irreducible} concerning quantum metrics and connections on quantum irreducible flag manifolds, as well as proving a result for the curvature of such connections.
In \cref{sec:lifting-maps} we construct lifting maps for the Heckenberger--Kolb calculi, satisfying various properties.
In \cref{sec:ricci-tensor} we use these results to deduce various properties of the Ricci tensor, and ultimately prove our result concerning the Einstein condition.
Finally, in \cref{sec:discussion} we offer some further discussion concerning this result as well as related open problems.

\subsection*{Acknowledgments}

We would like to thank Réamonn Ó Buachalla for his comments.

\section{Preliminaries}
\label{sec:preliminaries}

In this section we recall various notions related to quantum spaces with symmetries.

\subsection{Quantum irreducible flag manifolds}

A (generalized) \emph{flag manifold} is a homogeneous space of the form $G / P_S$, where $G$ is a complex semisimple Lie group and $P_S$ is the \emph{parabolic subgroup} corresponding to a subset $S$ of the simple roots $\Pi$.
It also admits the description $G / P_S \cong U / K_S$ as a real manifold, where $U$ is the compact real form of $G$ and $K_S = P_S \cap U$. Furthermore, we have that $K_S = L_S \cap U$, where $L_S$ is the \emph{Levi subgroup} corresponding to $S$, meaning that $K_S$ is its compact real form.
A flag manifold is called \emph{irreducible} if $S = \Pi \backslash \{\alpha_s\}$ and the simple root $\alpha_s$ has coefficient $1$ in the highest root of $\lieg$.

Let $\lieg$ be a complex semisimple Lie algebra of rank $r$.
The \emph{quantized enveloping algebra} $\Uqg$ is a Hopf algebra with generators $\{ K_i, E_i, F_i \}_{i = 1}^r$ and relations as in \cite{klsc} (for which we refer to for unexplained notions).
Here $q$ is the \emph{quantization parameter}, which we take to be a real number $q > 0$ (although note that some care is necessary to deal with the classical value $q = 1$).
We denote it by $\Uqu$ when taking into account the $*$-structure corresponding to the compact real form.
Dually we have the \emph{quantized coordinate ring} $\OqG$, which is a Hopf algebra defined in terms of the resticted dual of $\Uqg$ (more precisely, it consists of matrix coefficients of finite-dimensional type 1 representations of $\Uqg$).
As above, we write it as $\OqU$ when taking into account the corresponding $*$-structure.

The Hopf algebra $\OqG$ is naturally a $\Uqg$-bimodule by
\[
(Y \triangleright a \triangleleft Z)(X) := a(Z X Y),
\]
where $a \in \OqG$ and $X, Y, Z \in \Uqg$.
The right action $\triangleleft$ can be turned into a left action $\blacktriangleright$ by making use of the antipode, that is by setting $(Z \blacktriangleright a)(X) = a(S(Z) X)$.

\begin{remark}
We mention this (seemingly unnecessary) detail because it is the left action $\blacktriangleright$ which appears in many constructions from \cite{heko} (although this notation is not used).
It is also useful to point out that with this choice we get the module algebra relation
\[
X \blacktriangleright (a b) = (X_{(2)} \blacktriangleright a) (X_{(1)} \blacktriangleright b),
\]
which corresponds to using the opposite coproduct for $\Uqg$.
\end{remark}

Next, for any subset $S$ of simple roots we define the \emph{quantized Levi factor} by
\[
\UqlS := \langle K_i, E_j, F_j : i = 1, \ldots, r, \ j \in S \rangle \subseteq \Uqg.
\]
It is a Hopf subalgebra of $\Uqg$ for any choice of $S$.
We denote it by $\UqkS$ when considering the corresponding $*$-structure from $\Uqu$.
Finally we define
\[
\OqLevi := \{ a \in \OqG : X \triangleright a = \varepsilon(X) a, \ \forall X \in \UqlS \}.
\]
When taking into account the $*$-structure, we write this as $\Oqflag$ and refer to it as a \emph{quantum flag manifold} (see the next remark for this choice of notation).

\begin{remark}
The space $G / L_S$ can be seen as a \emph{complexification} of the flag manifold $G / P_S$, since $(G / L_S) \cap U = U / K_S$.
The points of $G / P_S$ can be recovered from $\mathcal{O}(G / L_S)$ as the \emph{real points} with respect to the involution coming from the compact real form.

Moving to the quantum setting, this means that the $*$-structure is crucial to interpret $\OqLevi$ as the quantum flag manifold $\Oqflag$.
It follows that any construction involving this algebra will have to be compatible with the $*$-structure, in order to interpret it as a construction for the quantum flag manifold.
For this reason we will insist on using the notations introduced above for the compact real forms of the relevant objects.
\end{remark}

In the following we will also adopt the shorthand notation
\[
\qalg := \Oqflag.
\]

To conclude this section, we note that all definitions so far do not require the \emph{irreducibility} condition for the quantum flag manifold $\Oqflag$.
However this will be necessary to have canonical differential calculus defined over this algebra, as we discuss next.

\subsection{Differential calculi}

For any unexplained notations in this section we refer to \cite{complex-structures}.
A \emph{differential calculus} $(\Omega^\bullet, \wedge, \diff)$ over an algebra $A$ is a differential graded algebra $\Omega^\bullet$ such that $\Omega^0 = A$ and $\Omega^\bullet$ is generated as an algebra by $A$ and $\diff A$.
If $A$ is a $*$-algebra, then we have a \emph{differential $*$-calculus} if there is an appropriate conjugate-linear involution on $\Omega^\bullet$.

A \emph{complex structure} for a differential $*$-calculus $\Omega^\bullet$ on a $*$-algebra $A$ is a $\bbN_0^2$-algebra grading $\Omega^{(\bullet, \bullet)}$, such that the following conditions are satisfied
\[
\Omega^k = \bigoplus_{p + q = k} \Omega^{(p, q)}, \quad
\left( \Omega^{(p, q)} \right)^* = \Omega^{(q, p)}, \quad
\diff \Omega^{(p, q)} \subseteq \Omega^{(p + 1, q)} \oplus \Omega^{(p, q + 1)}.
\]

We can also take into account quantum symmetries, corresponding to the case
of a Hopf algebra $H$ coacting on $A$ (say on the left). We say that $\Omega^\bullet$ is \emph{left $H$-covariant} if the left coaction extends to a comodule algebra map $\Omega^\bullet \to H \otimes \Omega^\bullet$ compatible with the differential.

It should be pointed out that, for a general non-commutative algebra $A$, there is no canonical choice of differential calculus as above.
However, in the case of quantum irreducible flag manifolds, we have the following important result due to Heckenberger and Kolb.

\begin{theorem}
There exists a unique differential $*$-calculus $(\diffcalc^\bullet, \wedge, \diff)$ over $\Oqflag$ which is left $\OqU$-covariant and having graded dimension as in the classical case.
\end{theorem}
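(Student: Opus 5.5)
The approach is to follow Heckenberger and Kolb: construct the first-order calculus from tangent-space data, then take the universal (maximal prolongation) extension and verify the classical graded dimensions.

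\emph{First-order calculus.} A left $\OqU$-covariant first-order calculus over the quantum homogeneous space $\qalg$ corresponds, via Takeuchi-type equivalence, to a right coideal (equivalently a finite-codimensional subspace of $\qalg^+$) or dually to a finite-dimensional subspace $T \subseteq \Uqg$ of "tangent vectors" stable under the adjoint action of $\UqkS$. In the irreducible case the classical cotangent space at the base point splits as $\mathfrak{u}_+ \oplus \mathfrak{u}_-$. These are the nilradicals of the parabolic and of its opposite, and each is an irreducible $\mathfrak{l}_S$-module. I would use this irreducibility to show that quantum analogues $V_\pm$ exist and are unique. They are spanned by iterated $q$-commutators of $E_s$ (respectively $F_s$) with the $E_j, F_j$, $j \in S$, and are irreducible $\UqlS$-modules of the classical dimension.

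Define $\Omega^1 = \Omega^{(1,0)} \oplus \Omega^{(0,1)}$, with $\diff a$ given by pairing $a$ against $V_+ \oplus V_-$. Surjectivity, i.e.\ that $\qalg\, \diff \qalg$ is all of $\Omega^1$, follows from a nondegeneracy argument on matrix coefficients. Uniqueness among covariant calculi of classical dimension follows from the classification of irreducible quotients of $\qalg^+/(\qalg^+)^2$. The $*$-structure interchanges the two summands because $E_s^* \sim F_s K_s$.

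\emph{Higher forms.} Let $\diffcalc^\bullet$ be the universal differential graded algebra generated by $\Omega^1$. It is obtained as the quotient of the tensor algebra over $\qalg$ by the ideal generated by $\diff$ of the degree-one relations. Through covariance, the degree-$k$ part becomes $\qalg \otimes \Lambda^k_q$, where $\Lambda^\bullet_q$ is a quadratic algebra over $V_+^* \oplus V_-^*$. Its relations are the $q$-antisymmetrizers determined by the braiding $\braidS$ (the quantum exterior algebras of the $\UqlS$-modules $V_\pm$), together with commutation relations between the two pieces. Maximality then gives uniqueness: any covariant calculus with the same $\Omega^1$ is a quotient of the universal one. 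Having exactly the classical dimensions in every degree therefore forces equality.

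\emph{Main obstacle: dimension count.} The key step is to show $\dim \Lambda^k_q = \binom{2n}{k}$, where $n = \dim V_+$. The plan is to show that the quantum exterior algebras $\Lambda(V_\pm)$ are flat deformations, using that the braiding on $V_\pm \otimes V_\pm$ has the classical number of eigenvalues, so the quadratic algebras are Koszul. Flatness should follow from Berger--Ginzburg/Drinfeld-type results on braided symmetric and exterior algebras of modules whose square decomposes multiplicity-free. This is exactly what the irreducibility ("cominuscule") hypothesis guarantees.

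One then shows that $\Lambda^\bullet_q \cong \Lambda(V_+^*) \otimes \Lambda(V_-^*)$ as vector spaces. I would do this via a PBW/diamond-lemma argument for the mixed relations, which are of braided-tensor-product type. The lower bound on dimensions can be obtained by specialization at $q = 1$ or by constructing explicit representations. The $*$-structure then extends uniquely as a graded anti-automorphism compatible with $\diff$.
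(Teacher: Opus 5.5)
The paper gives no argument of its own here: it cites Heckenberger--Kolb for existence, uniqueness and the dimension count, and \'O Buachalla for the $*$-structure. Your architecture matches theirs. Two parts of it are fine: reducing uniqueness of $\diffcalc^\bullet$ to uniqueness of $\diffcalc^1$ via the maximal prolongation, and treating the mixed part as a braided tensor product once both factors are known to be flat. The gap is the step you yourself flag as the main obstacle, which you resolve with a principle that is false. Multiplicity-freeness of $V_\pm\otimes V_\pm$, and the resulting eigenvalue description of the braiding, do not by themselves imply Koszulness or flatness of braided symmetric and exterior algebras. For example, the $4$-dimensional simple $U_q(\mathfrak{sl}_2)$-module has multiplicity-free tensor square, yet by Berenstein--Zwicknagl it is not flat. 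The cominuscule hypothesis has to enter through concrete structure. One way is to realize the relevant quantum symmetric algebra as $U_q(\mathfrak{u}_\pm)\subseteq\Uqg$. For an abelian nilradical this algebra is quadratic with a PBW basis of classical size, hence Koszul, so its Koszul dual has Hilbert series $(1+t)^n$. You would also need to compute the degree-two part of the maximal prolongation, rather than postulate it: its relations must be shown to be exactly these quadratic duals together with the mixed relations $v\otimes w+\braidS(v\otimes w)$. Moreover, specialization at $q=1$ gives at best the \emph{upper} bound $\dim\Lambda^k_q\le\binom{2n}{k}$ near $q=1$, because the rank of the degree-$k$ relation space is lower semicontinuous in $q$. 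The lower bound, which is precisely flatness, therefore remains unproved. (Minor: $\diffcalc^k\cong\Psi(\Lambda^k_q)$ is an induced, generally non-free $\qalg$-module, not $\qalg\otimes\Lambda^k_q$.)

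Uniqueness of $\diffcalc^1$ is also not established. With $\Phi(M)=M/\qalg^+M$, covariant first-order calculi correspond to right ideals $\mathcal{L}\subseteq\qalg^+$ that are $H$-subcomodules, with $\Phi(\diffcalc^1)\cong\qalg^+/\mathcal{L}$. The inclusion $\mathcal{L}\supseteq(\qalg^+)^2$ holds exactly when the induced right $\qalg$-action on $\Phi(\diffcalc^1)$ is trivial. Working only with quotients of $\qalg^+/(\qalg^+)^2$ therefore silently discards every calculus for which this fails, and that is the typical quantum situation. For instance, for $\mathcal{O}_q(SL_2)$ itself with $q\neq 1$, the space $\mathcal{O}_q(SL_2)^+/(\mathcal{O}_q(SL_2)^+)^2$ is one-dimensional, since an $\varepsilon$-derivation must vanish on the off-diagonal generators. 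Yet Woronowicz's covariant calculi have dimensions $3$ and $4$. Your argument also presupposes knowing $\qalg^+/(\qalg^+)^2$, i.e.\ all $\varepsilon$-derivations of $\qalg$, which you do not compute. Controlling arbitrary finite-dimensional covariant calculi is exactly what Heckenberger--Kolb's determination of the locally finite part of the dual coalgebra $\qalg^\circ$ accomplishes; your sketch replaces it with an assertion.
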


This result is described in detail in the two papers \cite{locally-finite, heko}, except for the existence of a $*$-structure, which was introduced in \cite{kahler}.

We will refer to $\diffcalc^\bullet$ as the \emph{Heckenberger--Kolb calculus} over $\Oqflag$.
It admits a natural complex structure $\diffcalc^{(\bullet, \bullet)}$ as in \cite[Proposition 6.4]{levicivita-irreducible}, corresponding to the fact that classically these spaces are complex manifolds.
In the following we will also write
\[
\diffcalc^+ := \diffcalc^{(1, 0)}, \qquad
\diffcalc^- := \diffcalc^{(0, 1)}.
\]

\subsection{Takeuchi's equivalence}

We will now briefly recall a well-known equivalence of categories due to Takeuchi \cite{takeuchi}, which we will call \emph{Takeuchi's equivalence}. For a more detailed description of this result, and its relevance to the study of quantum homogenenous spaces, we refer to \cite[Section 2]{complex-structures} and \cite[Section 2.2]{levicivita-irreducible}.
While essential to derive many results in the cited papers, here it will only play a minor role (specifically in the proof of \cref{prop:lifting-heko}), meaning that this section can be skipped on a quick reading.

Before stating the result, we need to setup some notation.
Let $A$ and $H$ be Hopf algebras with a surjective Hopf algebra homomorphism $\pi: A \to H$.
Then we denote by $B = A^{\mathrm{co}(H)}$ the $H$-coinvariant elements of $A$ with respect to the right $H$-coaction $(\id \otimes \pi) \circ \Delta$.
In this setting, $B$ is a \emph{quantum homogeneous space} if $A$ is faithfully flat as a right $B$-module (this is known to be true for the algebras $\Oqflag$, so we will not discuss this further).

With notations as above, we define two categories as follows.
The first is the category $\catone$ of relative Hopf-modules, which here are left $A$-comodules and $B$-bimodules with a certain compatibility condition.
Next we have the category $\cattwo$ of left $H$-comodules and right $B$-modules with another compatibility condition.
There is a functor
\[
\Phi: \catone \to \cattwo
\]
which on objects is given by $\Phi(M) = M / B^+ M$ (where $B^+$ is the kernel of the counit) and on morphisms gives the corresponding maps between the quotients.
There is also another functor $\Psi$ going in the other direction, whose details we will not need in the following.

\begin{theorem}[\cite{takeuchi}]
There is a monoidal equivalence between the categories $\catone$ and $\cattwo$ given by the functors $\Phi$ and $\Psi$ (with appropriate natural transformations).
\end{theorem}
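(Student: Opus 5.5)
The claim is Takeuchi's equivalence, so I would build the inverse functor $\Psi$ and show that both composites are naturally isomorphic to the identity; monoidality comes at the end. For $N \in \cattwo$ set
\[
\Psi(N) := A \,\square_H\, N = \Big\{ \textstyle\sum_i a_i \otimes n_i \in A \otimes N : \sum_i a_{i(1)} \otimes \pi(a_{i(2)}) \otimes n_i = \sum_i a_i \otimes n_{i(-1)} \otimes n_{i(0)} \Big\}.
\]
Here the left $A$-coaction is $\Delta \otimes \id$ and the left $B$-action is multiplication on the first leg. The right $B$-action is the diagonal one, $(a \otimes n) b = a b_{(1)} \otimes n b_{(2)}$. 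It lands in $N$ because $\Delta(B) \subseteq A \otimes B$, and $B$ acts on $N$ through the induced map. I would check directly that this is an object of $\catone$; coinvariance of $B$ is what makes the right action preserve the cotensor product.

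For the unit $M \to \Psi\Phi(M)$, $m \mapsto m_{(-1)} \otimes [m_{(0)}]$, well-definedness and compatibility with all structures are routine. For the counit $\Phi\Psi(N) \to N$, induced by $\varepsilon \otimes \id$, one checks that it kills $B^+ \Psi(N)$, which is immediate since $\varepsilon(B^+) = 0$.

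The main obstacle is proving that these two maps are isomorphisms, and this is where faithful flatness of $A$ over $B$ enters. I would first show that the canonical map $A \otimes_B A \to A \otimes H$, $a \otimes a' \mapsto a a'_{(1)} \otimes \pi(a'_{(2)})$, is bijective. This is the Hopf–Galois property: $A$ is an $H$-Galois extension of $B$, which follows from surjectivity of $\pi$ together with faithful flatness. Next I would base change along $B \to A$. After applying $A \otimes_B -$, the unit becomes an explicit map that the Galois isomorphism identifies with a bijection. Faithful flatness then descends bijectivity back to the unit itself. The counit is handled by a similar base-change computation, using that $A \square_H N$ is identified with $N$ after tensoring with $A$.

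Finally, for monoidality I would equip $\catone$ with $\otimes_B$ and $\cattwo$ with $\otimes$ using the diagonal $H$-coaction and the diagonal right $B$-action. The structure map
\[
\Phi(M) \otimes \Phi(N) \to \Phi(M \otimes_B N), \qquad [m] \otimes [n] \mapsto [m \otimes n],
\]
is well defined and compatible with both the $H$-coaction and the $B$-action. It is an isomorphism because the inverse is obtained from the unit isomorphisms $M \cong A \square_H \Phi(M)$. The coherence axioms then follow from naturality.
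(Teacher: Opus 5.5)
The paper gives no argument for this statement; it only cites \cite{takeuchi}, so your proposal has to stand on its own. Its architecture is the standard one: $\Psi(N) = A \,\square_H\, N$, unit and counit, base change along $A \otimes_B -$, and faithfully flat descent. The descent step is fine. The step that feeds it is not: bijectivity of $A \otimes_B A \to A \otimes H$ does \emph{not} follow from surjectivity of $\pi$ plus faithful flatness.

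The bijection $a \otimes a' \mapsto a a'_{(1)} \otimes a'_{(2)}$ of $A \otimes A$, with inverse $a \otimes a' \mapsto a S(a'_{(1)}) \otimes a'_{(2)}$, carries the kernel of $A \otimes A \to A \otimes_B A$ exactly onto $A \otimes B^+ A$. This uses only $\Delta(B) \subseteq A \otimes B$. So $A \otimes_B A \cong A \otimes A/B^+A$ always, and your canonical map is bijective if and only if $\ker \pi = B^+ A$. Faithful flatness plays no role here, and it cannot force this equality. Take $A = \mathcal{O}(SL_2)$ and $H = \mathcal{O}(P)$ with $P$ the upper triangular Borel subgroup. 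Then $B = A^{\mathrm{co}(H)} = \mathcal{O}(\mathbb{P}^1) = \mathbb{C}$, so $A$ is trivially faithfully flat over $B$. Yet $B^+ A = 0 \neq \ker \pi$, and $\Phi$ is just restriction of $SL_2$-comodules to $P$, which is not an equivalence. Takeuchi's theorem is really an equivalence with $A/B^+A$-comodules, so the hypothesis $\ker \pi = B^+ A$ must be added; it holds for the quantum flag manifolds. With it, the Galois isomorphism comes for free and your descent argument goes through.

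The monoidal part also fails as written. The ``diagonal right $B$-action'' $(v \otimes w) b = v b_{(1)} \otimes w b_{(2)}$ is not defined, since only $b_{(2)}$ lies in $B$ while $V$ is only a $B$-module. More seriously, $[m] \otimes [n] \mapsto [m \otimes n]$ is not well defined. For $\beta \in B^+$ we have $[\beta n] = 0$, but $[m \otimes \beta n] = [m \beta \otimes n]$ vanishes only if $m \beta \in B^+ M$. For instance, take $M = B \otimes B$ with diagonal coaction and the outer actions, and $N = B$. Then $\Phi(M \otimes_B N) \cong B$, and for $0 \neq \beta \in B^+$ the zero element $[1 \otimes 1] \otimes [\beta]$ is sent to $\beta \neq 0$.

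On all of $\catone$ you must instead transport the tensor product. Right exactness of $- \otimes_B N$ gives $\Phi(M \otimes_B N) \cong \Phi(M) \otimes_B N \cong \Phi(M) \otimes_B (A \,\square_H\, \Phi(N))$. So the product on $\cattwo$ is $V \otimes_B (A \,\square_H\, W)$. This agrees with $V \otimes W$ via your map only when $B$ acts on $V$ through $\varepsilon$, e.g. on the subcategory with $M B^+ = B^+ M$ used in the paper's corollary.
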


\begin{remark}
In the classical setting, this result can be translated into the fact that a $G$-equivariant vector bundle over a homogeneous space $G / H$ is completely determined by the representation of the isotropy subgroup $H$ at the identity coset.
\end{remark}

There is also a variant of this result that applies to a full subcategory of $\catone$.
This is denoted by $\catthree$ and consists objects $M$ which are finitely generated as left $B$-modules and satisfy $M B^+ = B^+ M$.
Similarly denote by $\catfour$ the category of finite-dimensional left $H$-comodules.
Then we have the following result (see \cite[Theorem 2.6]{levicivita-irreducible}).

\begin{corollary}
Takeuchi's equivalence restricts to an equivalence between $\catthree$ and $\catfour$.
\end{corollary}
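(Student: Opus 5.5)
The corollary says that Takeuchi's equivalence restricts to an equivalence $\catthree \simeq \catfour$. I would prove it by checking that the functors $\Phi$ and $\Psi$ carry each subcategory into the other. Since $\catthree$ and $\catfour$ are full subcategories and the natural isomorphisms $\Psi\Phi \cong \id$ and $\Phi\Psi \cong \id$ are already available from the theorem, nothing further is then needed. Recall that $\Psi(V) = A \,\square_H\, V$, the cotensor product, with the $B$-actions induced from $A$.

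First I would show that $\Phi$ sends $\catthree$ into $\catfour$. Let $M \in \catthree$, so $M$ is finitely generated as a left $B$-module by some $m_1, \dots, m_n$. Every element of $M$ is $\sum_i b_i m_i$ with $b_i \in B$. Writing $b_i = \varepsilon(b_i)1 + (b_i - \varepsilon(b_i)1)$, where the second summand lies in $B^+$, we get
\[
\sum_i b_i m_i \equiv \sum_i \varepsilon(b_i)\, m_i \pmod{B^+ M}.
\]
Hence the classes $[m_i]$ span $\Phi(M) = M/B^+M$, so it is finite-dimensional and lies in $\catfour$. The condition $MB^+ = B^+M$ means the right $B$-action on $\Phi(M)$ is the trivial one $[m]b = \varepsilon(b)[m]$. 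The object is therefore determined by its $H$-comodule structure alone, which is why $\catfour$ carries no module data.

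Conversely, for $V \in \catfour$ I would show that $\Psi(V)$ lies in $\catthree$, where $V$ is viewed in $\cattwo$ with the trivial right $B$-action.

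1. \emph{Finite generation.} Using faithful flatness, I would identify $A \,\square_H\, V$ with the space of $H$-colinear maps from $V^*$ into $A$, a module of coinvariants. Finite generation then follows from the standard fact that, for a faithfully flat quantum homogeneous space, such coinvariant modules are finitely generated projective. Concretely, after choosing a basis of $V$, $A \,\square_H\, V$ is a direct summand of $B^{\oplus n}$ given by an idempotent built from matrix coefficients (Takeuchi/Schneider). I expect this step to be the main obstacle, since it genuinely uses the structure of $A$ over $B$ rather than formal manipulation. I would cite the literature (for instance \cite[Theorem 2.6]{levicivita-irreducible} and the references there) rather than rederive it.

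2. \emph{The condition $MB^+ = B^+M$.} Set $M = \Psi(V)$. Since the right $B$-action on $V$ is trivial, the right $B$-action on $M$ corresponds under $\Phi$ to the trivial action. I would compare the sub-bimodules $B^+M$ and $MB^+$ using the monoidal structure. Both are relative Hopf submodules of $M$, and under $\Phi$ they both have image $0$ in $\Phi(M)$, because the right action on $\Phi(M)$ is $\varepsilon$. By the equivalence, a relative Hopf submodule $N \subseteq M$ with $\Phi(N) \to \Phi(M)$ zero must have $N + B^+M = B^+M$, so $N \subseteq B^+M$. Applying this with $N = MB^+$ gives $MB^+ \subseteq B^+M$.

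3. \emph{The reverse inclusion.} For $B^+M \subseteq MB^+$ I would use the symmetric argument: apply the equivalence in its right-module version, with quotient $M/MB^+$, or use the antipode to swap sides.

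Once both functors preserve the subcategories, fullness together with the inherited natural isomorphisms gives the restricted equivalence.
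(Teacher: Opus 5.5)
The paper gives no argument here beyond the pointer to \cite[Theorem 2.6]{levicivita-irreducible}, so your plan is an independent route. Its architecture is right: check that $\Phi$ and $\Psi$ preserve the two full subcategories, then restrict the natural isomorphisms. Your $\Phi$-direction is fine, and so is $MB^+ \subseteq B^+M$ for $M = \Psi(V)$. That inclusion is immediate from $\Phi(M) \cong V$ carrying the trivial right action.

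You should drop the claim that $B^+M$ and $MB^+$ are relative Hopf submodules, because it is false. Apply $\id \otimes \varepsilon \otimes \id$ to $(\Delta \otimes \id)(n)$: this shows that no non-zero left $A$-subcomodule of $A \,\square_H\, V$ lies in $\ker(\varepsilon \otimes \id)$.

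\textbf{The gap is in step 3.} The inclusion $B^+M \subseteq MB^+$ is equivalent to the left $B$-action on $M/MB^+$ being trivial, and ``the symmetric argument'' presupposes exactly this. In step 2 you used that $M$ is the image of an object with trivial right action. Nothing in your proposal shows that $M$ is the image of an object with trivial left action under a mirror equivalence.

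The missing observation is that $M = A \,\square_H\, V$, with $B$ acting by multiplication on both sides of the $A$-factor, is simultaneously $\Psi(V)$ and $\Psi'(V)$. Here $\Psi'$ is the inverse of the right-handed equivalence ${}^A\mathbf{Mod}_B \simeq {}^H\mathbf{Mod}$, $N \mapsto N/NB^+$. This is Takeuchi's theorem for $A^{\mathrm{op}}$. It needs the bijective antipode of $\OqU$ and faithful flatness of $A$ as a left $B$-module; the latter follows from the right-handed version via the involution, a conjugate-linear anti-automorphism preserving $\qalg$. Both counits at $V$ are induced by the same map $\varepsilon \otimes \id : M \to V$. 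Hence $B^+M = \ker(\varepsilon \otimes \id)|_M = MB^+$.

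\textbf{Step 1 is not established either.} \cite[Theorem 2.6]{levicivita-irreducible} is the statement you are proving, so citing it for finite generation is circular. Your concrete description is also wrong as stated. In general $\Psi(V)$ is not a direct summand of $B^{\oplus \dim V}$: the non-trivial line bundles over the Podle\'s sphere have one-dimensional $\Phi$ but are not free.

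A self-contained argument uses only the faithful flatness already assumed:
\begin{itemize}
\item The Galois map $A \otimes_B A \to A \otimes H$, $a \otimes a' \mapsto a a'_{(1)} \otimes \pi(a'_{(2)})$, is bijective and right $H$-colinear.
\item Flatness lets $A \otimes_B -$ commute with the cotensor product, so $A \otimes_B \Psi(V) \cong (A \otimes H) \,\square_H\, V \cong A \otimes V$, which is finitely generated over $A$.
\item Let $N \subseteq \Psi(V)$ be the left $B$-submodule generated by the second legs of finitely many generators. Right exactness gives $A \otimes_B (\Psi(V)/N) = 0$, hence $N = \Psi(V)$ by faithful flatness.
\end{itemize}
Alternatively, use cosemisimplicity of $H$ to realise $V$ as a direct summand of the restriction of a finite-dimensional $A$-comodule $W$. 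Then $\Psi(V)$ is a summand of $A \,\square_H\, W \cong B \otimes W$, and this is where an idempotent of size $\dim W$ (not $\dim V$) comes from.
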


In the setting of this paper, $A$ will be the quantized coordinate ring $\OqU$, $B$ the quantum irreducible flag manifold $\Oqflag$, and $H$ the Hopf algebra obtained by dualizing the injective map $\iota_S: \UqkS \hookrightarrow \Uqu$ as in \cite[Section 6.2]{levicivita-irreducible}.

Rather than working with coactions of $A$ and $H$ as above, we will dualize them to actions of their restricted duals.
Doing so will transform left $A$-comodules into (rational) right $A^\circ$-modules, see for instance \cite[Theorem 2.2.5]{hopf-book}.
In our setting, the restricted dual of $A = \OqU$ is $A^\circ = \Uqu$, while the dual of $H$ (as described above) is $H^\circ = \UqkS$.

Summarizing this discussion, we will consider the equivalence
\begin{equation}
\label{eq:categorical-equivalence}
\catfive \simeq \catsix.
\end{equation}
Here we have removed the subscript zero from the first category, to ease the notation.
Finally we note that we can turn right actions into left actions by using the antipode, which for $\OqU$ has the effect of transforming the right action $\triangleright$ of $\Uqu$ into $\blacktriangleright$, as mentioned before.

\begin{remark}
Trading coactions for actions, as above, also features at various points in the papers \cite{holomorphic-relative} and \cite{levicivita-irreducible}, however the details are not clearly spelled out there.
\end{remark}

\section{Quantum metrics and connections}
\label{sec:metrics-connections}

In this section we recall the main results from \cite{levicivita-irreducible} concerning quantum metrics and Levi-Civita connections on quantum irreducible flag manifolds.
We also prove a result concerning the curvature of such connections, which will be crucial in the following.

\subsection{Quantum metrics}

We begin by recalling the notion of quantum metric from \cite[Definition 1.15]{quantum-book}, with respect to a fixed choice of differential calculus.

\begin{definition}
Let $(\Omega^\bullet, \wedgemap, \diff)$ be a differential calculus over an algebra $A$.
Then a \emph{quantum metric} is an element $g \in \Omega^1 \otimes_A \Omega^1$ which is invertible in the following sense: there exists an $A$-bimodule map $(\cdot, \cdot) : \Omega^1 \otimes_A \Omega^1 \to A$, called the \emph{inverse metric}, such that
\[
\left( (\omega, \cdot) \otimes \id \right) (g) = \omega = \left( \id \otimes (\cdot, \omega) \right) (g)
\]
for all $\omega \in \Omega^1$. We say that $g$ is \emph{symmetric} if $\wedgemap(g) = 0$.
\end{definition}

There is also a notion of a metric being \emph{real}, for which we refer to the cited book.

In this setting, we have the following result proven in \cite[Theorem 6.12]{levicivita-irreducible}.

\begin{theorem}
\label{thm:covariant-metrics}
Let $\diffcalc^\bullet$ be the Heckenberger--Kolb calculus on a quantum irreducible flag manifold.
Then the set of covariant metrics on $\diffcalc^1 = \diffcalc^+ \oplus \diffcalc^-$ is a two-parameter family.
There is a unique (up to a scalar) quantum metric which is covariant, symmetric and real.
\end{theorem}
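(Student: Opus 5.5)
The statement is recalled from \cite[Theorem 6.12]{levicivita-irreducible}; I sketch how I would establish it from the structure of the Heckenberger--Kolb calculus and Takeuchi's equivalence.

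The plan is to reduce everything to representation theory of $\UqkS$ via the equivalence \eqref{eq:categorical-equivalence}. A covariant element $g \in \diffcalc^1 \otimes_\qalg \diffcalc^1$ is a covariant morphism $\qalg \to \diffcalc^1 \otimes_\qalg \diffcalc^1$ in $\catfive$. Since $\Phi$ is monoidal, such morphisms correspond to $\UqkS$-invariant vectors in $\Phi(\diffcalc^1) \otimes \Phi(\diffcalc^1)$. Here $\Phi(\diffcalc^+) = V$ and $\Phi(\diffcalc^-) = V^*$ are the two irreducible $\UqkS$-modules quantizing the holomorphic and antiholomorphic tangent spaces (the $\mathfrak{p}^\pm$ of the symmetric pair). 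So I first decompose
\[
(V \oplus V^*) \otimes (V \oplus V^*) = (V \otimes V) \oplus (V \otimes V^*) \oplus (V^* \otimes V) \oplus (V^* \otimes V^*).
\]
I then count invariants. Since $V$ and $V^*$ are irreducible and non-isomorphic, with $V$ not self-dual (classically the central $U(1)$ of $K_S$ acts with opposite nonzero weights on them), $V \otimes V$ and $V^* \otimes V^*$ carry nonzero central character and hence have no invariants. Each of $V \otimes V^*$ and $V^* \otimes V$ contains exactly one invariant line, by Schur's lemma. This gives the two-parameter family $g = a\, g^{+-} + b\, g^{-+}$. Invertibility holds for $a, b \neq 0$: the inverse metric is built from the dual pairings $V^* \otimes V \to \mathbb{C}$, transported back through $\Psi$, and these are bimodule maps by covariance.

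Next I would impose symmetry, $\wedge(g) = 0$. The HK relations in degree two identify $\diffcalc^{(1,1)}$ with a quotient of $\diffcalc^+ \otimes_\qalg \diffcalc^- \oplus \diffcalc^- \otimes_\qalg \diffcalc^+$. Its invariant part is one-dimensional, spanned by the image of the Kähler form. So $\wedge(g^{+-})$ and $\wedge(g^{-+})$ are proportional and both nonzero, since the Kähler form is nonzero by \cite{kahler}. Symmetry then becomes a single linear condition, fixing the ratio $b/a$ to a $q$-dependent constant. Reality, with respect to the $*$-structure exchanging $\diffcalc^\pm$, forces the remaining overall scalar to be real, up to the natural phase conventions.

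The main obstacle I expect is the verification that $\wedge(g^{+-})$ and $\wedge(g^{-+})$ are both nonzero and that the resulting symmetric metric is compatible with reality. That is, the symmetry ratio must match the constraint coming from the $*$-involution, so that the symmetric line is actually preserved by it. I would handle this through explicit computation with the quantum Killing form/$R$-matrix description of the degree-two relations from \cite{heko}, checking that $*$ maps the symmetric line to itself.
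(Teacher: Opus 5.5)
Your first half is sound: you identify covariant elements of $\diffcalc^1 \otimes_\qalg \diffcalc^1$ with $\UqkS$-invariants in $(V^{(1,0)} \oplus V^{(0,1)})^{\otimes 2}$. You find one invariant line in each mixed product and none in $V^{(1,0)}\otimes V^{(1,0)}$ or $V^{(0,1)}\otimes V^{(0,1)}$. You then transport evaluation/coevaluation pairs to get invertibility exactly when both coefficients are nonzero. This is the argument the paper sketches in the remark after \cref{prop:ricci-proportional}; otherwise the theorem is only cited from \cite[Theorem 6.12]{levicivita-irreducible}.

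The gap is in the symmetric part. You justify that $\wedgemap(g_{+ -})$ and $\wedgemap(g_{- +})$ are both nonzero by the Kähler form being nonzero, and that does not follow. A nonzero invariant in $\diffcalc^{(1,1)}$, together with surjectivity of $\wedgemap$, only shows that $\wedgemap$ is nonzero on the invariant plane, i.e.\ that at least one of the two is nonzero. Existence hinges on both: if $\wedgemap(g_{+ -}) = 0 \neq \wedgemap(g_{- +})$, symmetry forces $b = 0$, and then no symmetric covariant element is invertible.

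No $R$-matrix computation is needed to close this. By \cref{lem:characterization-mixed}, the kernel of $\wedgemap$ on $(V^{(1,0)} \otimes V^{(0,1)}) \oplus (V^{(0,1)} \otimes V^{(1,0)})$ is the graph $\{x + \braidS(x)\}$ of the invertible map $\braidS$. This graph meets each summand only in $0$. So $\wedgemap$ restricted to either summand is injective, and by a dimension count it is an isomorphism onto $V^{(1,1)}$; its inverses are the maps $s_{+ -}$, $s_{- +}$ in the proof of \cref{prop:lifting-heko}. Via Takeuchi's equivalence, $\wedgemap$ maps each of $\diffcalc^+ \otimes_\qalg \diffcalc^-$ and $\diffcalc^- \otimes_\qalg \diffcalc^+$ isomorphically onto $\diffcalc^{(1,1)}$. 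This gives the nonvanishing, and also the one-dimensionality of the invariants of $\diffcalc^{(1,1)}$, which you asserted without justification.

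The compatibility with reality, which you flag as the other obstacle, is automatic. Define $\dagger(\omega \otimes \eta) := \eta^* \otimes \omega^*$. It is well defined on $\diffcalc^1 \otimes_\qalg \diffcalc^1$, antilinear and involutive. It sends coinvariant elements to coinvariant elements, since the coaction is a $*$-map. It also satisfies $\wedgemap \circ \dagger = - {*} \circ \wedgemap$, because $(\omega \wedge \eta)^* = - \eta^* \wedge \omega^*$ for $1$-forms. Hence $\dagger$ preserves the complex line of covariant symmetric elements and acts there by $g_0 \mapsto c\, g_0$, with $|c| = 1$ by involutivity. Writing $c = e^{i\theta}$, its fixed points form the real line spanned by $e^{i\theta/2} g_0$. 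This gives a covariant, symmetric, real quantum metric, unique up to a real scalar. With these two observations your outline becomes a complete proof.
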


\begin{remark}
In \cite{matassa-fubini} there is an explicit construction of elements $g \in \diffcalc^1 \otimes_\qalg \diffcalc^1$ which are covariant, symmetric and real.
However the existence of the inverse metric $(\cdot, \cdot)$ was established only in the case of quantum projective spaces.
It follows from \cref{thm:covariant-metrics} that these must be quantum metrics for any quantum irreducible flag manifold $\qalg$.
\end{remark}

It follows from the theorem above that all quantum metrics are spanned by
\[
g_{+ -} \in \diffcalc^+ \otimes_\qalg \diffcalc^-, \qquad
g_{- +} \in \diffcalc^- \otimes_\qalg \diffcalc^+,
\]
where both elements are unique up to scalars.
In the following we will choose them in such a way that the unique (up to a scalar) \emph{symmetric} quantum metric takes the form
\[
g := g_{+ -} + g_{- +}.
\]

\subsection{Connections}

Let $(\Omega^\bullet, \wedge, \diff)$ be a differential calculus over an algebra $A$.
Then a \emph{left connection} on an $A$-bimodule $E$ is a linear map $\nabla: E \to \Omega^1 \otimes_A E$ such that the Leibniz rule $\nabla(a e) = a \nabla(e) + \diff a \otimes_A e$ holds for all $e \in E$ and $a \in A$.
It is called a \emph{bimodule connection} if there exists an $A$-bimodule map $\sigma: E \otimes_A \Omega^1 \to \Omega^1 \otimes_A E$ such that
\[
\nabla(e a) = \nabla(e) a + \sigma(e \otimes_A \diff a)
\]
for all $e \in E$ and $a \in A$.
In this case we can define a notion of compatibility between a connection $\nabla$ and a quantum metric $g$, for which we refer to \cite[Section 8.1]{quantum-book}.

Let us now consider the case of a quantum irreducible flag manifold $\qalg$ equipped with the Heckenberger--Kolb calculus $\diffcalc^\bullet$.
One of the main results of \cite{levicivita-irreducible} is the following.

\begin{theorem}[{\cite[Theorem 6.14]{levicivita-irreducible}}]
\label{thm:unique-connection}
Let $\diffcalc^\bullet$ be the Heckenberger--Kolb calculus on $\qalg$. Let $g$ be the unique (up to a scalar) quantum metric $g$ from \cref{thm:covariant-metrics}.
Then there exists a unique covariant bimodule connection $\nabla$ on $\diffcalc^1$ which is torsion-free and compatible with $g$.
\end{theorem}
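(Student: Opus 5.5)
The plan is to split the statement into a uniqueness part and an existence part. Both will be handled by reducing covariant objects to finite-dimensional representation theory of $\UqkS$ through the equivalence \eqref{eq:categorical-equivalence}.

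\textbf{Uniqueness.} Covariant left connections on $\diffcalc^1$ form an affine space. Its underlying vector space consists of the covariant left $\qalg$-module maps $\alpha: \diffcalc^1 \to \diffcalc^1 \otimes_\qalg \diffcalc^1$. Let $V = V^+ \oplus V^-$ be the $\UqkS$-module corresponding to $\diffcalc^1 = \diffcalc^+ \oplus \diffcalc^-$. By \eqref{eq:categorical-equivalence}, such maps $\alpha$ correspond to $\UqkS$-module maps $V \to V \otimes V$, so the space of parameters is finite-dimensional. Now let $\nabla$ and $\nabla'$ both be covariant, torsion-free and compatible with $g$, and set $\alpha = \nabla - \nabla'$. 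Then:
\begin{itemize}
\item torsion-freeness gives $\wedgemap \circ \alpha = 0$;
\item compatibility with $g$ gives the linearised condition $(\alpha \otimes \id)(g) + (\sigma \otimes \id)(\id \otimes \alpha)(g) = 0$ (suitably interpreted).
\end{itemize}
I would split $\alpha$ into its components $\Omega^\pm \to \Omega^{\epsilon_1} \otimes_\qalg \Omega^{\epsilon_2}$. For the Heckenberger--Kolb calculus, $V^\pm$ are irreducible $\UqkS$-modules, and $\wedgemap$ is injective on the mixed parts $V^+\otimes V^-\oplus V^-\otimes V^+$ modulo the quantum symmetric part. Using these facts together with Schur's lemma, the two conditions should cut the finite-dimensional parameter space down to zero. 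This is the quantum analogue of the Koszul argument, which classically shows that a symmetric tensor that is antisymmetric in the last two slots vanishes.

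\textbf{Existence.} I would build $\nabla$ as the direct sum of holomorphic and antiholomorphic pieces. On the holomorphic relative line/vector bundle $\diffcalc^+$, take the Chern connection associated with the Hermitian structure induced by $g_{+-}$; its $(0,1)$-part is $\overline{\partial}$, and it exists by the covariant Chern connection construction. On $\diffcalc^-$ take the conjugate connection. Both are covariant by construction. Being bimodule connections then needs a map $\sigma$. I would obtain $\sigma$ from the fact that $\nabla(ea) - \nabla(e)a$ depends only on $\diff a$ and defines a covariant bimodule map. Its well-definedness is checked in the category $\catfive$, where $\diffcalc^1 B^+ = B^+\diffcalc^1$.

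\textbf{Verifying the properties.} Torsion-freeness should follow because $\wedgemap\circ\nabla$ agrees with $\partial$ on $\diffcalc^+$ and with $\overline\partial$ on $\diffcalc^-$, by type considerations. Metric compatibility should follow from the Chern connection being compatible with the Hermitian form, translated through the identification of $g_{+-}$ with that form.

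\textbf{Main obstacle.} I expect the hardest step to be the uniqueness linear algebra, namely showing the linearised compatibility condition is nondegenerate. This requires control of $\sigma$, which is a $q$-deformed flip, on $V^\pm\otimes V^\mp$. I would first try to prove that $\sigma$ acts there as the $\UqkS$-braiding scaled by a nonzero constant, so that the Koszul-type cancellation goes through component by component.
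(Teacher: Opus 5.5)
\textbf{Two gaps.} Your existence construction ($\nabla_{\mathrm{Ch}}$ on $\diffcalc^+$ plus the opposite/conjugate Chern connection on $\diffcalc^-$) matches what the paper attributes to \cite{levicivita-irreducible}; the paper does not reprove the theorem itself. However, the two steps you leave as ``should'' are genuine gaps.

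\emph{Uniqueness.} The components of $\alpha=\nabla-\nabla'$ correspond to $\UqkS$-maps such as $V^{(1,0)}\to V^{(1,0)}\otimes V^{(1,0)}$ or $V^{(1,0)}\to V^{(1,0)}\otimes V^{(0,1)}$. Simplicity of $V^{(1,0)},V^{(0,1)}$ plus Schur's lemma tells you nothing about these Hom spaces, so you never actually show that torsion-freeness and compatibility force $\alpha=0$. The linearisation is also unjustified as written. $(\alpha\otimes\id)(g)$ is only defined on $\diffcalc^1\otimes_\qalg\diffcalc^1$ if $\alpha$ is right $\qalg$-linear. Otherwise the difference of the two compatibility equations contains the extra term $((\sigma-\sigma')\otimes\id)(\id\otimes\nabla)(g)$, where $\sigma,\sigma'$ are the bimodule maps of $\nabla,\nabla'$.

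\emph{Torsion.} Type considerations only handle the $(1,1)$-component, which equals $\overline{\partial}$ by construction of the holomorphic structure. The $(2,0)$-identity $\wedgemap\circ\nabla^{(1,0)}=\partial$ on $\diffcalc^+$ is the quantum counterpart of the Kähler condition: classically, the Chern connection is torsion-free iff the metric is Kähler. So it needs a real argument.

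\textbf{The missing idea: the $\alpha_s$-grading.} Because $\alpha_s$ has coefficient $1$ in the highest root, all weights of $V^{(1,0)}$ have the same $\alpha_s$-coefficient $\epsilon=\pm1$, and all weights of $V^{(0,1)}$ have $-\epsilon$. Choose a grouplike $Z=\prod_i K_i^{n_i}\in\UqkS$ with $\sum_i n_i\alpha_i$ a nonzero integer multiple of $\omega_s$; for $q=1$ use the centre of $\mathfrak{k}_S$ instead. Then $Z$ acts on vectors whose weight has $\alpha_s$-coefficient $m$ by $q^{mc}$ for a fixed $c\neq0$, and multiplicatively on tensor products. Its eigenvalues are therefore:
\begin{itemize}
\item $q^{\pm c}$ on $V^1=V^{(1,0)}\oplus V^{(0,1)}$;
\item in $\{1,q^{\pm2c}\}$ on $V^1\otimes V^1$ and on its quotient $V^2=\Phi(\diffcalc^2)$;
\item in $\{q^{\pm c},q^{\pm3c}\}$ on $(V^1)^{\otimes3}$.
\end{itemize}

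Via Takeuchi's equivalence this settles everything at once:
\begin{itemize}
\item $\mathrm{Hom}_{\UqkS}(V^1,V^1\otimes V^1)=0$, so any two covariant left connections on $\diffcalc^1$ coincide. This uses neither torsion nor the metric, and your ``main obstacle'' about $\sigma$ disappears.
\item The torsion $\wedgemap\circ\nabla-\diff$ is a covariant left module map $\diffcalc^1\to\diffcalc^2$, and $\mathrm{Hom}_{\UqkS}(V^1,V^2)=0$. Hence every covariant connection is torsion-free.
\item For a covariant bimodule connection, $\nabla(g)$ is an invariant element of $\diffcalc^1\otimes_\qalg\diffcalc^1\otimes_\qalg\diffcalc^1$. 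Since $(V^1)^{\otimes3}$ has no invariant vectors, $\nabla(g)=0$ automatically.
\end{itemize}

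What remains is only the existence of one covariant bimodule connection on $\diffcalc^1$. Your Chern construction, together with the bimodule property you sketch for objects of $\catthree$, supplies this. Classically, this is the fact that on a Hermitian symmetric space the Levi-Civita connection is the unique invariant connection.
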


Such a connection is referred to as the \emph{quantum Levi-Civita connection} (with respect to $g$).

Next, let us recall the standard notion of curvature of a connection.

\begin{definition}
Let $(\Omega^\bullet, \wedge, \diff)$ be a differential calculus over $A$.
Let $\nabla$ be a left connection on a left $A$-module $E$.
Then its \emph{curvature} is the linear map $R_\nabla: E \to \Omega^2 \otimes_A E$ defined by
\[
R_\nabla := (\diff \otimes \id - (\wedge \otimes \id) \circ (\id \otimes \nabla) \circ \nabla.
\]
\end{definition}

If $\Omega^\bullet$ carries a complex structure, in general all we can say is that the first leg of $R_\nabla$ will be an element of $\Omega^2$.
However, in our setting we can say something more specific.

The following result is a simple consequence of \cite[Theorem 5.3]{quantum-book} and the construction of the connection $\nabla$ from \cite{levicivita-irreducible}.
It is the quantum analogue of a classical property of the Levi-Civita connection on a Kähler manifold, see for instance \cite[Chapter 12]{moroianu}.

\begin{proposition}
\label{prop:curvature-chern}
Let $\nabla$ be the connection from \cref{thm:unique-connection}.
Then its curvature satisfies
\[
R_{\nabla}(\diffcalc^{\pm}) \subseteq \diffcalc^{(1, 1)} \otimes_{\qalg} \diffcalc^{\pm}.
\]
\end{proposition}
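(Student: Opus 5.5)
The approach is to use the structure of the Levi-Civita connection from \cite{levicivita-irreducible}. There it is constructed as a direct sum $\nabla = \nabla^+ \oplus \nabla^-$, where $\nabla^\pm$ is the unique covariant connection on $\diffcalc^\pm$. Moreover, on $\diffcalc^+$ (respectively $\diffcalc^-$) it is built from a holomorphic (respectively antiholomorphic) structure via the Chern connection construction of \cite[Theorem 5.3]{quantum-book}. I would therefore reduce the statement to two facts about each $\nabla^\pm$ separately.

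The first step is to observe that $\nabla$ preserves the decomposition $\diffcalc^1 = \diffcalc^+ \oplus \diffcalc^-$, that is, $\nabla(\diffcalc^\pm) \subseteq \diffcalc^1 \otimes_\qalg \diffcalc^\pm$. This follows because each $\diffcalc^\pm$ is an object of $\catfive$ corresponding to an irreducible $\UqkS$-module, and the connection in \cite{levicivita-irreducible} is defined summand-wise. Consequently $R_\nabla(\diffcalc^\pm) \subseteq \diffcalc^2 \otimes_\qalg \diffcalc^\pm$, and it remains to control the first leg.

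Next, split $\nabla^\pm = \partial_\nabla + \bar\partial_\nabla$ according to $\diffcalc^1 = \diffcalc^{(1,0)} \oplus \diffcalc^{(0,1)}$. Expanding the curvature gives components in $\diffcalc^{(2,0)}$, $\diffcalc^{(1,1)}$ and $\diffcalc^{(0,2)}$. The $(0,2)$-component is $\bar\partial_\nabla^2$. It vanishes because $\bar\partial_\nabla$ is a flat $\bar\partial$-operator, i.e.\ a holomorphic structure on $\diffcalc^\pm$ (for $\diffcalc^+$ this is the holomorphic structure from \cite{holomorphic-relative}, and for $\diffcalc^-$ it is obtained by conjugation). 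The $(2,0)$-component is $\partial_\nabla^2$. It vanishes by \cite[Theorem 5.3]{quantum-book}, which says that the Chern connection associated to a holomorphic structure and a compatible Hermitian structure has curvature of type $(1,1)$. Equivalently, $\partial_\nabla$ is the adjoint of $\bar\partial_\nabla$ with respect to the Hermitian metric induced by $g$, so its square is the adjoint of $\bar\partial_\nabla^2 = 0$.

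The main obstacle is to verify that the connection of \cref{thm:unique-connection} genuinely coincides with the Chern connection of \cite[Theorem 5.3]{quantum-book} on each summand. This amounts to checking that the hypotheses of that theorem hold: a covariant Hermitian structure induced from $g_{+-}$, $g_{-+}$, and the holomorphic structure given by the $(0,1)$-part of $\nabla^\pm$. I would attempt this via uniqueness rather than direct computation. Both connections are covariant left connections on $\diffcalc^\pm$. By Takeuchi's equivalence \eqref{eq:categorical-equivalence}, together with the irreducibility of $\diffcalc^\pm$, the space of covariant connections on $\diffcalc^\pm$ is a single point, as shown in \cite{levicivita-irreducible}. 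Hence the two connections must agree, and the statement follows.
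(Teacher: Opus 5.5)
Your proposal is correct and is essentially the paper's argument: $\nabla$ is the sum of the Chern connection on $\diffcalc^+$ and the Chern connection on $\diffcalc^-$ for the opposite complex structure, and \cite[Theorem 5.3]{quantum-book} (which needs factorizability of $\diffcalc^\bullet$, a hypothesis you should list) kills the $(2,0)$ and $(0,2)$ components of their curvatures; your uniqueness-of-covariant-connections argument is just an optional double check of what the paper reads off directly from the construction in \cite{levicivita-irreducible}. The only slip is on $\diffcalc^-$, where the built-in flat operator is the $(1,0)$-part (a holomorphic structure for the opposite complex structure, which is what conjugation produces), so there $\partial_\nabla^2 = 0$ holds by construction and $\bar\partial_\nabla^2 = 0$ comes from the Chern theorem; this is harmless, since exchanging $(a,b) \leftrightarrow (b,a)$ fixes $\diffcalc^{(1,1)}$.
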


\begin{proof}
In \cite[Theorem 6.14]{levicivita-irreducible} the connection $\nabla$ is constructed as $\nabla = \nabla_{\mathrm{Ch}} + \nabla_{\mathrm{Ch, op}}$.
Here $\nabla_{\mathrm{Ch}}$ is the Chern connection on $\diffcalc^+$ (see the cited reference for this definition) and $\nabla_{\mathrm{Ch, op}}$ is the Chern connection on $\diffcalc^-$ with respect to the opposite complex structure.

By \cite[Theorem 5.3]{quantum-book}, the Chern connection obeys the following important property: the components $\diffcalc^{(2, 0)}$ and $\diffcalc^{(0, 2)}$ of its curvature both vanish.
(Note that this result requires the property of factorizability of the calculus, which indeed holds for $\diffcalc^\bullet$).
Hence we get
\[
R_{\nabla_{\mathrm{Ch}}}(\diffcalc^+) \subseteq \diffcalc^{(1, 1)} \otimes \diffcalc^+.
\]
The argument is analogous for the connection $\nabla_{\mathrm{Ch, op}}$, which is defined by considering the differential calculus with its opposite complex structure.
Concretely this means that the graded component $\diffcalc^{(a, b)}$ is exchanged with $\diffcalc^{(b, a)}$, which leaves $\diffcalc^{(1, 1)}$ invariant.
\end{proof}

Because of this result, in the following we will only need a lifting map $\diffcalc^2 \to \diffcalc^1 \otimes_\qalg \diffcalc^1$ to be defined on the subspace $\diffcalc^{(1, 1)}$.
This notion is recalled in the next section.

\section{Construction of lifting maps}
\label{sec:lifting-maps}

As mentioned in the introduction, a lifting map is an extra piece of data necessary in the quantum setting to define the Ricci tensor.
We will construct such maps for the Heckenberger--Kolb calculi, which we will ultimately use to prove the Einstein condition.

\subsection{Lifting maps}

We now recall the concept of lifting map for a differential calculus, which will be used later on to define the Ricci tensor in the quantum setting.

\begin{definition}
Let $(\Omega^\bullet, \wedgemap, \diff)$ be a differential calculus over an algebra $A$.
Then a \emph{lifting map} is an $A$-bimodule map $\liftmap: \Omega^2 \to \Omega^1 \otimes_A \Omega^1$ such that $\wedgemap \circ \liftmap = \id$.
\end{definition}

We have the following elementary observation.

\begin{lemma}
\label{lem:convex-lifting}
Suppose $\liftmap_1$ and $\liftmap_2$ are lifting maps.
Then $\liftmap = c_1 \liftmap_1 + c_2 \liftmap_2$ is a lifting map if and only if $c_1 + c_2 = 1$, in other words if $\liftmap$ is a convex combination of the two maps.
\end{lemma}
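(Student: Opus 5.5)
The plan is to check the two defining conditions of a lifting map directly for $\liftmap = c_1 \liftmap_1 + c_2 \liftmap_2$, where $c_1, c_2$ are scalars.

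First, $\liftmap$ is an $A$-bimodule map for every choice of $c_1, c_2$. The reason is that $A$-bimodule maps $\Omega^2 \to \Omega^1 \otimes_A \Omega^1$ form a vector space over the ground field, since the bimodule structure is linear over scalars. Hence the linear combination automatically satisfies $\liftmap(a \omega b) = a \liftmap(\omega) b$ for all $a, b \in A$ and $\omega \in \Omega^2$. This leaves only the splitting condition $\wedgemap \circ \liftmap = \id$ to analyze.

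Second, by linearity of $\wedgemap: \Omega^1 \otimes_A \Omega^1 \to \Omega^2$ we compute
\[
\wedgemap \circ \liftmap = c_1 \, \wedgemap \circ \liftmap_1 + c_2 \, \wedgemap \circ \liftmap_2 = (c_1 + c_2) \id_{\Omega^2}.
\]
If $c_1 + c_2 = 1$, this equals $\id$, so $\liftmap$ is a lifting map. Conversely, if $\liftmap$ is a lifting map, then $(c_1 + c_2 - 1)\id_{\Omega^2} = 0$. This forces $c_1 + c_2 = 1$ provided $\Omega^2 \neq 0$.

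The only delicate point is this degenerate case. If $\Omega^2 = 0$, every pair of scalars works trivially, so the ``only if'' direction implicitly assumes $\Omega^2 \neq 0$. That assumption holds in the situations of interest, where $\diffcalc^2$ has classical graded dimension and so is nonzero whenever the flag manifold has complex dimension at least $1$; I would state this assumption explicitly in the proof. The same argument applies verbatim if the lifting maps are only defined on a sub-bimodule such as $\diffcalc^{(1,1)}$, as in the remark after \cref{prop:curvature-chern}. Finally, ``convex'' is meant here in the affine sense, $c_1 + c_2 = 1$, with no positivity required of $c_1$ and $c_2$.
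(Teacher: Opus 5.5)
Your proposal is correct and follows the paper's proof: the bimodule property holds by linearity, and $\wedgemap \circ \liftmap = (c_1 + c_2)\,\id$ gives the equivalence. Your caveats are valid refinements that the paper leaves implicit: the ``only if'' direction needs $\Omega^2 \neq 0$ (or $\diffcalc^{(1,1)} \neq 0$), and ``convex'' is meant in the affine sense.
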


\begin{proof}
Clearly $\liftmap = c_1 \liftmap_1 + c_2 \liftmap_2$ is an $A$-bimodule map. We compute
\[
\wedgemap \circ \liftmap = \wedgemap \circ (c_1 \liftmap_1 + c_2 \liftmap_2) = (c_1 + c_2) \id.
\]
Hence $\wedgemap \circ \liftmap = \id$ if and only if $c_1 + c_2 = 1$.
\end{proof}

Our goal in the next subsection will be to prove the existence of certain (equivariant) lifting maps for the Heckenberger--Kolb calculus $\diffcalc^\bullet$ over a quantum irreducible flag manifold.

\subsection{Heckenberger--Kolb calculus}

In the following we will write
\[
V^{(a, b)} := \Phi(\diffcalc^{(a, b)}),
\]
where $\Phi$ is Takeuchi's functor. More concretely, we have $V^{(a, b)} = \diffcalc^{(a, b)} / \qalg^+ \diffcalc^{(a, b)}$.
All these (finite-dimensional) vector spaces are modules for the quantized Levi factor $\UqkS$.

We will need the following concrete description of the component $V^{(1, 1)}$.

\begin{lemma}[\cite{heko}]
\label{lem:characterization-mixed}
We have that $V^{(1, 1)}$ is isomorphic to the quotient of
\[
(V^{(1, 0)} \otimes V^{(0, 1)}) \oplus (V^{(0, 1)} \otimes V^{(1, 0)})
\]
by the subspace of relations of the form
\[
\{ v \otimes w + \braidS(v \otimes w) : v \in V^{(1, 0)}, \ w \in V^{(0, 1)} \}.
\]
Here $\braidS: V^{(1, 0)} \otimes V^{(0, 1)} \to V^{(0, 1)} \otimes V^{(1, 0)}$ is a certain invertible $\UqkS$-equivariant map, which reduces to the flip map in the classical limit $q = 1$.
\end{lemma}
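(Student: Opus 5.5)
The plan is to work entirely on the level of the $\UqkS$-modules $V^{(a,b)}$ by means of Takeuchi's equivalence \eqref{eq:categorical-equivalence}, and to identify $V^{(1,1)}$ as the image of the degree-two mixed tensors under the wedge map. Since $\Phi$ is monoidal, we have $\Phi(\diffcalc^{a} \otimes_\qalg \diffcalc^{b}) \cong \Phi(\diffcalc^{a}) \otimes \Phi(\diffcalc^{b})$. Because the calculus is generated in degree one, the wedge map restricted to the mixed part
\[
(\diffcalc^+ \otimes_\qalg \diffcalc^-) \oplus (\diffcalc^- \otimes_\qalg \diffcalc^+) \to \diffcalc^{(1,1)}
\]
is a surjective morphism in $\catfive$. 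Applying $\Phi$ therefore gives a surjective $\UqkS$-equivariant map
\[
\wedgemap : (V^{(1,0)} \otimes V^{(0,1)}) \oplus (V^{(0,1)} \otimes V^{(1,0)}) \to V^{(1,1)}.
\]
Hence $V^{(1,1)}$ is the quotient of the direct sum by a $\UqkS$-submodule $K = \ker \wedgemap$. It remains to show that $K$ is the graph of $-\braidS$ for an invertible equivariant $\braidS$.

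The first step is a dimension count. Writing $n = \dim V^{(1,0)} = \dim V^{(0,1)}$, the source has dimension $2n^2$. The classical graded dimension of $\diffcalc^\bullet$ gives $\dim V^{(1,1)} = n^2$, so $\dim K = n^2$.

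The second step, which I expect to be the main obstacle, is to show that both restrictions
\[
\wedgemap|_{V^{(1,0)} \otimes V^{(0,1)}}, \qquad \wedgemap|_{V^{(0,1)} \otimes V^{(1,0)}}
\]
are isomorphisms onto $V^{(1,1)}$. This is precisely the factorizability of the Heckenberger--Kolb calculus already used in the proof of \cref{prop:curvature-chern}, namely that the wedge maps $\diffcalc^{(a,0)} \otimes_\qalg \diffcalc^{(0,b)} \to \diffcalc^{(a,b)}$ and $\diffcalc^{(0,b)} \otimes_\qalg \diffcalc^{(a,0)} \to \diffcalc^{(a,b)}$ are isomorphisms. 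If one prefers not to invoke factorizability, I would instead use the explicit quadratic relations of \cite{heko}. There the degree-two part of the calculus is presented as a quotient of the tensor algebra on $V^{(1,0)} \oplus V^{(0,1)}$. The relations split into the "holomorphic" ones in $V^{(1,0)} \otimes V^{(1,0)}$, the "antiholomorphic" ones in $V^{(0,1)} \otimes V^{(0,1)}$, and mixed ones expressing each element of $V^{(0,1)} \otimes V^{(1,0)}$ in terms of $V^{(1,0)} \otimes V^{(0,1)}$. Together with the dimension count, this presentation yields the injectivity of each restriction.

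Granting the second step, $K \cap (V^{(1,0)} \otimes V^{(0,1)}) = 0$ and $K \cap (V^{(0,1)} \otimes V^{(1,0)}) = 0$. The projection of $K$ onto the first summand is then injective, and by the dimension count it is bijective. Hence $K$ is the graph of a linear map $-\braidS : V^{(1,0)} \otimes V^{(0,1)} \to V^{(0,1)} \otimes V^{(1,0)}$. Explicitly, $\braidS = -\bigl(\wedgemap|_{V^{(0,1)} \otimes V^{(1,0)}}\bigr)^{-1} \circ \wedgemap|_{V^{(1,0)} \otimes V^{(0,1)}}$.

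This formula shows that $\braidS$ is invertible and $\UqkS$-equivariant, since it is a composition of equivariant isomorphisms. Finally, at $q = 1$ the calculus reduces to the classical de Rham calculus, which is graded commutative. There $v \wedge w = -\, w \wedge v$, so $\braidS$ is the flip map. Since the relations of \cite{heko} depend continuously on $q$, this identifies the classical limit as claimed.
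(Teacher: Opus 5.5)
Your argument is correct in outline, but it takes a different route from the paper. The paper simply cites \cite[Proposition 3.11(ii)]{heko} for the presentation of $V^{(1,1)}$. It then cites \cite[Lemma 2.2]{heko} for the fact that $\braidS$ is a power of $q$ times the braiding $\hat{R}_{V^{(1,0)}, V^{(0,1)}}$ of $\UqkS$-modules. Equivariance, invertibility and the classical limit are then read off from known properties of $\hat{R}$.

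You instead derive the shape of the relations structurally. Monoidality of Takeuchi's functor, together with factorizability (both restrictions of $\wedgemap$ to $V^{(1,0)} \otimes V^{(0,1)}$ and to $V^{(0,1)} \otimes V^{(1,0)}$ are isomorphisms onto $V^{(1,1)}$), forces the kernel on the mixed part to be the graph of $\braidS = -(\wedgemap|_{V^{(0,1)} \otimes V^{(1,0)}})^{-1} \circ \wedgemap|_{V^{(1,0)} \otimes V^{(0,1)}}$.

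\textbf{What each route buys.} Your route gives an intrinsic characterization: $\braidS$ is uniquely determined by the calculus, and equivariance and invertibility come for free, with no reference to the $R$-matrix. What it gives up is the explicit identification with the braiding, which is what makes the $q \to 1$ statement immediate in the paper. In your version the value at $q = 1$ follows from graded commutativity. However, the statement about the limit rests on an unproved continuity claim, and that claim in the end refers back to the explicit relations of \cite{heko}. Your fallback (using the explicit quadratic relations of \cite{heko}) is essentially the paper's citation itself.

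\textbf{Dependence on factorizability.} Factorizability of $\diffcalc^\bullet$ is itself a consequence of the Heckenberger--Kolb analysis of the degree-two relations, which is essentially the statement you are proving. So your main route repackages the result rather than proving it independently. This is acceptable within the paper, which treats factorizability as known.

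\textbf{Two small corrections.} First, since the relations are $v \otimes w + \braidS(v \otimes w)$, the kernel is the graph of $\braidS$, not of $-\braidS$. Your explicit formula is the correct one, and it does give the flip at $q = 1$. Second, the total graded dimension $\dim V^2 = \binom{2n}{2}$ alone does not give $\dim V^{(1,1)} = n^2$; you need the bigraded dimensions. Factorizability already supplies $V^{(1,1)} \cong V^{(1,0)} \otimes V^{(0,1)}$, so your separate dimension count is redundant.
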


\begin{proof}
The first statement is the content of \cite[Proposition 3.11(ii)]{heko}.
That $\braidS$ is an invertible $\UqkS$-equivariant map equivariant map follows from \cite[Lemma 2.2]{heko}, where it is shown that $\braidS$ is a multiple of the braiding $\hat{R}_{V^{(1, 0)}, V^{(0, 1)}}$ of $\UqkS$-modules.
That $\braidS$ reduces to the flip map follows from the corresponding result for the braiding $\hat{R}$, together with the fact that the two coincide up to an appropriate power of $q$.
\end{proof}

We are now ready to prove the following existence result for lifting maps.

\begin{proposition}
\label{prop:lifting-heko}
Let $\diffcalc^\bullet$ be the Heckenberger--Kolb calculus over $\qalg$.
Then there exist two $\Uqu$-equivariant lifting maps $\liftqpm$ and $\liftqmp$ such that
\[
\liftqpm : \diffcalc^{(1, 1)} \to \diffcalc^+ \otimes_\qalg \diffcalc^-
\qquad \textrm{and} \qquad
\liftqmp : \diffcalc^{(1, 1)} \to \diffcalc^- \otimes_\qalg \diffcalc^+.
\]
In the classical limit $q = 1$ they reduce to the maps
\[
x \wedge y \mapsto x \otimes y,
\quad \textrm{and} \quad
x \wedge y \mapsto - y \otimes x.
\]
\end{proposition}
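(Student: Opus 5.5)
The plan is to build the lifting maps at the level of the fibre $V^{(1,1)}$ and then transport them to the bimodules via Takeuchi's equivalence \eqref{eq:categorical-equivalence}. Since $\Phi$ is a monoidal equivalence, $\Phi(\diffcalc^+ \otimes_\qalg \diffcalc^-) \cong V^{(1,0)} \otimes V^{(0,1)}$ and $\Phi(\diffcalc^- \otimes_\qalg \diffcalc^+) \cong V^{(0,1)} \otimes V^{(1,0)}$. Moreover, the wedge map $\wedgemap$ restricted to the mixed components corresponds under $\Phi$ to the quotient map
\[
\pi : (V^{(1,0)} \otimes V^{(0,1)}) \oplus (V^{(0,1)} \otimes V^{(1,0)}) \to V^{(1,1)}
\]
from \cref{lem:characterization-mixed}. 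Covariant bimodule maps between objects of $\catfive$ correspond bijectively to $\UqkS$-module maps between the fibres. It therefore suffices to construct $\UqkS$-equivariant sections of the two restricted quotient maps.

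Write $\pi_{+-}$ and $\pi_{-+}$ for the restrictions of $\pi$ to the two summands. By the lemma, the relations identify $v \otimes w$ with $-\braidS(v \otimes w)$. Since every element of the quotient is the class of an element of the direct sum, and the second summand can be moved to the first using $-\braidS^{-1}$, the map $\pi_{+-}: V^{(1,0)} \otimes V^{(0,1)} \to V^{(1,1)}$ is surjective. It is also injective, because the relation subspace is the graph of $\braidS$ and so meets $V^{(1,0)} \otimes V^{(0,1)}$ trivially. Hence $\pi_{+-}$ is a $\UqkS$-equivariant isomorphism. The same argument, using invertibility of $\braidS$, shows that $\pi_{-+}$ is an isomorphism.

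We then set $\ell_{+-} := \pi_{+-}^{-1}$ and $\ell_{-+} := \pi_{-+}^{-1}$ at the fibre level. These are equivariant because $\braidS$ is equivariant. Applying $\Psi$ gives covariant $\qalg$-bimodule maps $\liftqpm$ and $\liftqmp$ satisfying $\wedgemap \circ \liftqpm = \id$ and $\wedgemap \circ \liftqmp = \id$ on $\diffcalc^{(1,1)}$, and they land in the stated summands. Concretely, $\liftqmp(\pi(v\otimes w)) = -\braidS(v \otimes w)$.

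At $q=1$, $\braidS$ is the flip. Then $\liftqpm(x\wedge y) = x \otimes y$ for $x \in \diffcalc^+$ and $y \in \diffcalc^-$, and $\liftqmp(x \wedge y) = -y \otimes x$, which are the stated classical maps.

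The main obstacle is checking carefully that $\wedgemap$ really corresponds under $\Phi$ to the quotient map of \cref{lem:characterization-mixed}, including the identification of $\Phi$ of the tensor products. This requires the monoidality of Takeuchi's equivalence on $\catthree$, and the fact that the description in \cite{heko} is phrased through the quotient by $\qalg^+$. A secondary subtlety is making precise what "reduces at $q=1$" means, which should follow by continuity in $q$ from the corresponding property of $\braidS$.
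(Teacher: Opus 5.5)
Your proposal is correct and takes the paper's route: reduce via Takeuchi's equivalence to $\UqkS$-equivariant sections of the quotient map in \cref{lem:characterization-mixed}, build them from $\braidS$ and $\braidS^{-1}$, and get the classical limit from $\braidS$ reducing to the flip. Your inverses $\pi_{+-}^{-1}$ and $\pi_{-+}^{-1}$ are exactly the paper's $s_{+-}$ and $s_{-+}$, which the paper obtains by defining $\tilde{s}_{\pm\mp}$ on the direct sum and checking they vanish on the relations; the only difference is that you also prove bijectivity, and there the injectivity of $\pi_{+-}$ needs injectivity of $\braidS$, not just the fact that the relations form a graph.
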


\begin{proof}
By Takeuchi's equivalence, as in \eqref{eq:categorical-equivalence}, it suffices to prove the analogous result for the corresponding $\UqkS$-modules.
That is, we are looking for $\UqkS$-equivariant maps
\[
s_{+ -}: V^{(1, 1)} \to V^{(1, 0)} \otimes V^{(0, 1)}
\quad \textrm{and} \quad
s_{- +}: V^{(1, 1)} \to V^{(0, 1)} \otimes V^{(1, 0)},
\]
such that composing them with the quotient map $\wedgemap$ gives the identity.

Write $V := (V^{(1, 0)} \otimes V^{(0, 1)}) \oplus (V^{(0, 1)} \otimes V^{(1, 0)})$ and note that $V^{(1, 1)}$ is a quotient of $V$ by \cref{lem:characterization-mixed}.
Consider the map $\tilde{s}_{+ -} : V \to V^{(1, 0)} \otimes V^{(0, 1)}$ defined by
\[
\tilde{s}_{+ -}(v \otimes w) = v \otimes w, \quad
\tilde{s}_{+ -}(w \otimes v) = - \braidS^{-1}(w \otimes v).
\]
Here $v \in V^{(1, 0)}$ and $w \in V^{(0, 1)}$, while $\braidS^{-1}$ is the inverse of the map from \cref{lem:characterization-mixed}.
We have
\[
\tilde{s}_{+ -} \left( v \otimes w + \braidS(v \otimes w) \right) = v \otimes w - \braidS^{-1} \circ \braidS(v \otimes w) = 0.
\]
Then $\tilde{s}_{+ -}$ descends to the quotient $V^{(1, 1)}$ by \cref{lem:characterization-mixed}, so that we obtain a well-defined map $s_{+ -} : V^{(1, 1)} \to V^{(1, 0)} \otimes V^{(0, 1)}$.
This map is $\UqkS$-equivariant because $\braidS$ is $\UqkS$-equivariant.
It is also clear that $\wedgemap \circ s_{+ -} = \id$ by construction.

Similarly, we obtain a $\UqkS$-equivariant map $s_{- +} : V^{(1, 1)} \to V^{(0, 1)} \otimes V^{(1, 0)}$ starting from
\[
\tilde{s}_{- +}(v \otimes w) = - \braidS(v \otimes w), \quad
\tilde{s}_{- +}(w \otimes v) = w \otimes v,
\]
and applying similar considerations as those given above.

Finally let us discuss what happens in the classical limit $q = 1$.
We know that $\braidS$ reduces to the flip map by \cref{lem:characterization-mixed}.
It follows that $s_{+ -}$ and $s_{- +}$ reduce to the maps $v \wedge w \mapsto v \otimes w$ and $v \wedge w \mapsto - w \otimes v$, respectively.
This observation, together with Takeuchi's equivalence, establishes the conclusion for the classical limits of the lifting maps $\liftqpm$ and $\liftqmp$.
\end{proof}

\begin{remark}
These are not not necessarily all $\Uqu$-equivariant lifting maps for $\diffcalc^2$. For instance, it is possible to modify the construction above by assigning different signs to the various simple components of the relevant tensor products.

The focus on the lifting maps defined in \cref{prop:lifting-heko} is because of their classical limits, a property which will ultimately used in the proof of \cref{thm:einstein-condition}.
\end{remark}

\section{Ricci tensor and Einstein condition}
\label{sec:ricci-tensor}

This section contains the main results of the paper, which concern the Einstein condition for quantum irreducible flag manifolds.
After recalling the definition of the Ricci tensor in this context, we prove that it takes a special form provided we use appropriate lifting maps.
Once this is done, we prove a first result that hints at the fact that the Einstein condition should hold for generic $q$.
Finally, we use the classical limit to deduce that the Einstein condition holds in a open interval around the classical value $q = 1$.

\subsection{Definitions}

We begin by recalling the definition of the Ricci tensor in the context of quantum Riemannian geometry, following \cite[Section 8.1]{quantum-book}.

\begin{definition}
Let $(\Omega^\bullet, \wedgemap, \diff)$ be a differential calculus over an algebra $A$. Let $g$ be a quantum metric with inverse metric $(\cdot, \cdot)$. Let $\nabla$ be a left connection on $\Omega^1$ and $\ell: \Omega^2 \to \Omega^1 \otimes_A \Omega^1$ be a lifting map.
Then the \emph{Ricci tensor} (for the map $\ell$ as above) is defined by
\[
\Ricci_\ell := ((\cdot, \cdot) \otimes \id \otimes \id) \circ (\id \otimes \ell \otimes \id) \circ (\id \otimes R_\nabla)(g) \in \Omega^1 \otimes_A \Omega^1.
\]
\end{definition}

\begin{remark}
The subscript here is meant to stress that this element depends on the choice of the lifting map $\ell$.
This notation will also make some later arguments more transparent.
\end{remark}

Suppose $A$ consists of functions on a classical space, with $\Omega^1$ the usual space of $1$-forms.
Then $\Ricci_\ell$ reduces to the ordinary Ricci tensor provided we use the lifting map
\[
\ell(x \wedge y) = \frac{1}{2} (x \otimes y - y \otimes x), \quad
x, y \in \Omega^1.
\]
For details regarding this claim in the classical setting see \cite[Example 8.10]{quantum-book}.

We can then naturally define a quantum analogue of the Einstein condition.

\begin{definition}
With notations as above, we say that an algebra $A$ with a fixed quantum metric $g$ (and additional data specified before) satisfies the \emph{Einstein condition} if we have
\[
\Ricci_\ell = \lambda g.
\]
Here $\lambda$ is a scalar which we call the \emph{Einstein constant}.
\end{definition}

Note that this condition depends crucially on the choice of the lifting map $\ell$, unlike in the classical case (where it is always with respect to the antisymmetrization map as above).

Our goal in the following will be to check whether this condition holds for the case of quantum irreducible flag manifolds (with quantum metrics as previously discussed).

\subsection{Results for the Ricci tensor}

Let $\qalg = \Oqflag$ be a quantum irreducible flag manifold with the corresponding Heckenberger--Kolb calculus $\diffcalc^\bullet$.
We consider the unique (up to a scalar) quantum metric from \cref{thm:covariant-metrics} and the corresponding unique connection from \cref{thm:unique-connection}.
We will investigate $\Ricci_\liftmap$ for various choices of lifting maps.

We should note that, in all arguments that will follow, we will only need a lifting map $\liftmap$ to be defined on the subspace $\diffcalc^{(1, 1)} \subset \diffcalc^2$.
This is a consequence of \cref{prop:curvature-chern} concerning the curvature of the connection $\nabla$ (also see the next proof for details).

\begin{lemma}
\label{lem:ricci-lifting}
Let $\Ricci_\liftmap$ be the Ricci tensor for the quantum flag manifold $\qalg$, defined with respect to a lifting map $\liftmap$.
We have the following properties:
\begin{enumerate}
\item if $\liftmap : \diffcalc^{(1, 1)} \to \diffcalc^+ \otimes_\qalg \diffcalc^-$ then $\Ricci_\liftmap \in \diffcalc^- \otimes_\qalg \diffcalc^+$,
\item if $\liftmap : \diffcalc^{(1, 1)} \to \diffcalc^- \otimes_\qalg \diffcalc^+$ then $\Ricci_\liftmap \in \diffcalc^+ \otimes_\qalg \diffcalc^-$.
\end{enumerate}
\end{lemma}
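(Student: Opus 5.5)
The plan is to track the bigrading through each of the three maps composed in the definition of $\Ricci_\liftmap$. I start from $g = g_{+ -} + g_{- +}$ with $g_{+ -} \in \diffcalc^+ \otimes_\qalg \diffcalc^-$ and $g_{- +} \in \diffcalc^- \otimes_\qalg \diffcalc^+$, and treat the two summands separately, since every map in the composition is additive.

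\emph{Step 1: apply $\id \otimes R_\nabla$.} By \cref{prop:curvature-chern}, $R_\nabla(\diffcalc^\pm) \subseteq \diffcalc^{(1, 1)} \otimes_\qalg \diffcalc^\pm$. Hence $(\id \otimes R_\nabla)(g_{+ -})$ lies in $\diffcalc^+ \otimes_\qalg \diffcalc^{(1, 1)} \otimes_\qalg \diffcalc^-$, and $(\id \otimes R_\nabla)(g_{- +})$ lies in $\diffcalc^- \otimes_\qalg \diffcalc^{(1, 1)} \otimes_\qalg \diffcalc^+$. This is also why $\liftmap$ only needs to be defined on $\diffcalc^{(1, 1)}$.

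\emph{Step 2: apply $\id \otimes \liftmap \otimes \id$.} In case (1), $\liftmap$ maps $\diffcalc^{(1, 1)}$ into $\diffcalc^+ \otimes_\qalg \diffcalc^-$. So the $g_{+ -}$ term lands in $\diffcalc^+ \otimes \diffcalc^+ \otimes \diffcalc^- \otimes \diffcalc^-$, and the $g_{- +}$ term lands in $\diffcalc^- \otimes \diffcalc^+ \otimes \diffcalc^- \otimes \diffcalc^+$. All tensor products here are over $\qalg$. Since $\liftmap$ is a $\qalg$-bimodule map, this step is well defined on the balanced tensor products.

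\emph{Step 3: apply $(\cdot, \cdot) \otimes \id \otimes \id$.} Here I need that the inverse metric pairs only $\diffcalc^\pm$ with $\diffcalc^\mp$, i.e.\ $(\diffcalc^+, \diffcalc^+) = 0 = (\diffcalc^-, \diffcalc^-)$. I expect this to be the main point requiring care.

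To justify it, I would use that the inverse metric is a bimodule map, and check that the restriction of $(\cdot, \cdot)$ to $\diffcalc^+ \otimes_\qalg \diffcalc^+$ is zero via the inversion identity applied to $g$. Concretely, for $\omega \in \diffcalc^+$ the identity $((\omega, \cdot) \otimes \id)(g) = \omega$ holds. The $g_{+ -}$ part contributes an element of $\qalg \cdot \diffcalc^- = \diffcalc^-$, and the $g_{- +}$ part contributes an element of $\diffcalc^+$. Comparing components in the direct sum $\diffcalc^1 = \diffcalc^+ \oplus \diffcalc^-$ gives $((\omega, \cdot) \otimes \id)(g_{+ -}) = 0$.

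This vanishing does not by itself force $(\omega, \cdot)$ to vanish on $\diffcalc^+$. So I would instead argue via covariance: by Takeuchi's equivalence, $(\cdot, \cdot)$ corresponds to a $\UqkS$-invariant pairing on $V^{(1, 0)} \oplus V^{(0, 1)}$. The pairings $V^{(1, 0)} \otimes V^{(1, 0)} \to \mathbb{C}$ vanish because $V^{(1, 0)} \otimes V^{(1, 0)}$ contains no trivial summand. The central $K$-element in $\UqkS$ corresponding to $\alpha_s$ acts with weight of the same sign on both factors, so the total weight is nonzero. The same argument handles $V^{(0, 1)} \otimes V^{(0, 1)}$.

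Granting Step 3, the pairing kills the $g_{+ -}$ term, which has pattern $(+,+)$ in its first two legs. It leaves the $g_{- +}$ term, whose surviving legs lie in $\diffcalc^- \otimes_\qalg \diffcalc^+$. This gives (1). Case (2) is symmetric: now the $g_{- +}$ term dies with pattern $(-,-)$, and the $g_{+ -}$ term survives in $\diffcalc^+ \otimes_\qalg \diffcalc^-$.
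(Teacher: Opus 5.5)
Your proposal is correct and follows the paper's proof step for step. Use \cref{prop:curvature-chern} to place $(\id \otimes R_\nabla)(g_{\pm\mp})$ in $\diffcalc^\pm \otimes_\qalg \diffcalc^{(1,1)} \otimes_\qalg \diffcalc^\mp$, apply the lifting map, and kill the $(+,+)$ (resp.\ $(-,-)$) term with the inverse metric. The only difference is in Step 3: the paper cites \cite[Theorem 3.6]{levicivita-irreducible} for the vanishing of $(\cdot,\cdot)$ on $\diffcalc^\pm \otimes_\qalg \diffcalc^\pm$, whereas you derive it from Takeuchi's equivalence and a central-weight argument, which is valid provided the inverse metric is covariant (an assumption the paper also uses implicitly in \cref{prop:ricci-proportional}).
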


\begin{proof}
(1) Using \cref{prop:curvature-chern} we have
\[
\begin{split}
(\id \otimes R_\nabla)(g_{+ -}) & \in \diffcalc^+ \otimes_\qalg \diffcalc^{(1, 1)} \otimes_\qalg \diffcalc^-, \\
(\id \otimes R_\nabla)(g_{- +}) & \in \diffcalc^- \otimes_\qalg \diffcalc^{(1, 1)} \otimes_\qalg \diffcalc^+.
\end{split}
\]
Suppose $\liftmap : \diffcalc^{(1, 1)} \to \diffcalc^+ \otimes_\qalg \diffcalc^-$. Then applying this map gives
\[
\begin{split}
(\id \otimes \liftmap \otimes \id) \circ (\id \otimes R_\nabla)(g_{+ -}) & \in \diffcalc^+ \otimes_\qalg \diffcalc^+ \otimes_\qalg \diffcalc^- \otimes_\qalg \diffcalc^-, \\
(\id \otimes \liftmap \otimes \id) \circ (\id \otimes R_\nabla)(g_{- +}) & \in \diffcalc^- \otimes_\qalg \diffcalc^+ \otimes_\qalg \diffcalc^- \otimes_\qalg \diffcalc^+.
\end{split}
\]
It follows from \cite[Theorem 3.6]{levicivita-irreducible} that the inverse metric $(\cdot, \cdot)$ is zero on $\diffcalc^+ \otimes \diffcalc^+$ (and similarly it is zero on $\diffcalc^- \otimes \diffcalc^-$).
Using this and $g = g_{+ -} + g_{- +}$ we get
\[
\Ricci_\liftmap = ((\cdot, \cdot) \otimes \id \otimes \id) \circ (\id \otimes \liftmap \otimes \id) \circ (\id \otimes R_\nabla)(g_{- +}) \in \diffcalc^- \otimes_\qalg \diffcalc^+.
\]

(2) The argument for the case $\liftmap : \diffcalc^{(1, 1)} \to \diffcalc^- \otimes \diffcalc^+$ is completely analogous.
\end{proof}

We can now use the properties above, together with the uniqueness (up to scalars) of the quantum metrics from \cref{thm:covariant-metrics}, to derive the following result.

\begin{proposition}
\label{prop:ricci-proportional}
If $\liftmap$ is a $\Uqu$-equivariant lifting map then $\Ricci_\liftmap$ is a $\Uqu$-invariant element of $\diffcalc^1 \otimes_\qalg \diffcalc^1$. Furthermore, we have the following properties:
\begin{enumerate}
\item if $\liftmap_{+ -} : \diffcalc^{(1, 1)} \to \diffcalc^+ \otimes_\qalg \diffcalc^-$ then $\Ricci_{\liftmap_{+ -}} = a g_{+ -}$ for some coefficient $a$,
\item if $\liftmap_{- +} : \diffcalc^{(1, 1)} \to \diffcalc^- \otimes_\qalg \diffcalc^+$ then $\Ricci_{\liftmap_{- +}} = b g_{- +}$ for some coefficient $b$.
\end{enumerate}
\end{proposition}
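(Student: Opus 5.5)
The plan is to show first that $\Ricci_\liftmap$ is invariant, by writing it as a composition of $\Uqu$-equivariant maps applied to an invariant element. Then \cref{lem:ricci-lifting} confines it to a single summand of $\diffcalc^1 \otimes_\qalg \diffcalc^1$, and a Schur-type argument shows that the invariants in each summand are one-dimensional.

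\textbf{Step 1: equivariance of the ingredients.}
\begin{itemize}
\item The metric $g = g_{+ -} + g_{- +}$ is covariant by \cref{thm:covariant-metrics}, i.e.\ a $\Uqu$-invariant element.
\item The connection $\nabla$ is covariant by \cref{thm:unique-connection}, and $\diff$ and $\wedgemap$ are covariant because the Heckenberger--Kolb calculus is covariant. Hence $R_\nabla = (\diff \otimes \id - (\wedgemap \otimes \id)\circ(\id \otimes \nabla))\circ \nabla$ is $\Uqu$-equivariant.
\item The lifting map $\liftmap$ is equivariant by hypothesis.
\item For the inverse metric $(\cdot,\cdot)$, I would use that it is uniquely determined by $g$ through the defining identities. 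Translating it by the action of $\Uqu$ gives another inverse for the (invariant) metric, so uniqueness forces equivariance. Alternatively, one can read this off directly from its construction in \cite{levicivita-irreducible}.
\end{itemize}
The composition $((\cdot,\cdot)\otimes\id\otimes\id)\circ(\id\otimes\liftmap\otimes\id)\circ(\id\otimes R_\nabla)$ is therefore equivariant, so it sends the invariant element $g$ to an invariant element. This proves the first claim.

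\textbf{Step 2: locating $\Ricci_\liftmap$.} By \cref{lem:ricci-lifting}, when $\liftmap$ takes values in one of the two mixed components, $\Ricci_\liftmap$ lies in a single mixed summand of $\diffcalc^1 \otimes_\qalg \diffcalc^1$. This is $\diffcalc^+\otimes_\qalg\diffcalc^-$ or $\diffcalc^-\otimes_\qalg\diffcalc^+$. I will match the labels in items (1) and (2) of the statement with the component produced by the lemma. As written, the lemma and the proposition appear to pair the lifting maps with opposite components, so the labelling must be made consistent with the lemma's conclusion. The argument itself is symmetric in the two cases.

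\textbf{Step 3: one-dimensionality of invariants.} It remains to show that the invariant elements of $\diffcalc^+\otimes_\qalg\diffcalc^-$ form a one-dimensional space spanned by $g_{+ -}$, and similarly for $\diffcalc^-\otimes_\qalg\diffcalc^+$ and $g_{- +}$. I expect this to be the main point needing care. \cref{thm:covariant-metrics} only classifies covariant \emph{metrics}, which carry an invertibility condition, whereas we need all invariant elements.

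To handle this, I would pass through Takeuchi's equivalence \eqref{eq:categorical-equivalence}.
\begin{itemize}
\item $\Uqu$-invariant elements of an object $M$ correspond to morphisms from the trivial object $\qalg$ into $M$.
\item Such morphisms correspond to $\UqkS$-invariant vectors in $\Phi(M)$.
\item Since $\Phi$ is monoidal, $\Phi(\diffcalc^+\otimes_\qalg\diffcalc^-) \cong V^{(1,0)}\otimes V^{(0,1)}$.
\item For an irreducible flag manifold, $V^{(1,0)}$ is an irreducible $\UqkS$-module and $V^{(0,1)}$ is its dual. Schur's lemma then gives a one-dimensional space of invariants.
\end{itemize}
Since $g_{+ -}$ is a nonzero invariant element of $\diffcalc^+\otimes_\qalg\diffcalc^-$, it spans this space. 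Hence $\Ricci_{\liftmap_{+ -}}$ is a scalar multiple of the corresponding $g$, and likewise in case (2). The possibility that the coefficient is zero is allowed.
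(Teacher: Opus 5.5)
Your proposal is correct and follows the paper's proof: every map in the definition of $\Ricci_\liftmap$ is equivariant and is applied to the invariant $g$, then \cref{lem:ricci-lifting} places $\Ricci_\liftmap$ in a single mixed summand, and the invariants there are one-dimensional. For that last step the paper's proof cites \cref{thm:covariant-metrics}, and the remark right after it gives exactly your Takeuchi/Schur justification. You are also right about the labels: \cref{lem:ricci-lifting} puts $\Ricci_{\liftmap_{+-}}$ in $\diffcalc^- \otimes_\qalg \diffcalc^+$, so it is a multiple of $g_{-+}$, and likewise $\Ricci_{\liftmap_{-+}}$ is a multiple of $g_{+-}$. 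The paper's proof makes the same slip, but it is harmless downstream because the Einstein condition still reduces to $c_1 a = c_2 b$.
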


\begin{proof}
If $\liftmap$ is $\Uqu$-equivariant, then all maps appearing in the definition of the Ricci tensor are $\Uqu$-equivariant.
Since the composition of these maps is applied to the $\Uqu$-invariant element $g$, it follows that the element $\Ricci_\liftmap$ must be $\Uqu$-invariant.

(1) Here we are now assuming that $\liftmap_{+ -} : \diffcalc^{(1, 1)} \to \diffcalc^+ \otimes_\qalg \diffcalc^-$, together with $\Uqu$-equivariance as above.
Then \cref{lem:ricci-lifting} shows that $\Ricci_{\liftmap_{+ -}}$ must be a $\Uqu$-invariant element of $\diffcalc^- \otimes \diffcalc^+$.
It follows that $\Ricci_{\liftmap_{+ -}}$ must be a multiple of $g_{+ -}$ by \cref{thm:covariant-metrics}.

(2) The proof in the case of $\liftmap_{- +} : \diffcalc^{(1, 1)} \to \diffcalc^- \otimes_\qalg \diffcalc^+$ is completely analogous.
\end{proof}

\begin{remark}
In the proof above, we do not need the full result of \cref{thm:covariant-metrics}. The only fact which is needed is that all $\Uqu$-invariant elements of $\diffcalc^1 \otimes_\qalg \diffcalc^1$ are spanned by unique (up to scalars) elements in $\diffcalc^+ \otimes_\qalg \diffcalc^-$ and $\diffcalc^- \otimes_\qalg \diffcalc^+$.
This easily follows from Takeuchi's equivalence, since $V^1 = \Phi(\diffcalc^1)$ decomposes as $V^1 = V^{(1, 0)} \oplus V^{(0, 1)}$, where both summands are non-isomorphic simple $\UqkS$-modules (see also \cite[Lemma 6.8]{levicivita-irreducible}).
\end{remark}

\subsection{The Einstein condition}

Before discussing the Einstein condition for quantum irreducible flag manifolds, it is interesting to note that in our setting it is equivalent to the requirement that the Ricci tensor should be \emph{symmetric}. The precise result is as follows.

\begin{proposition}
Let $\qalg$ be a quantum irreducible flag manifold, with Heckenberger--Kolb calculus $\diffcalc^\bullet$ and quantum metric $g$.
Let $\liftmap$ be a $\Uqu$-equivariant lifting map for $\diffcalc^2$.
Then the Einstein condition holds if and only if the Ricci tensor is symmetric, that is $\wedgemap(\Ricci_\liftmap) = 0$.
\end{proposition}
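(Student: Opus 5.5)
The plan is to combine the invariance of the Ricci tensor with the two-dimensional description of invariant elements of $\diffcalc^1 \otimes_\qalg \diffcalc^1$. Since $\liftmap$ is $\Uqu$-equivariant, \cref{prop:ricci-proportional} shows that $\Ricci_\liftmap$ is a $\Uqu$-invariant element of $\diffcalc^1 \otimes_\qalg \diffcalc^1$. By \cref{thm:covariant-metrics} (or the remark after \cref{prop:ricci-proportional}), the invariant elements are spanned by $g_{+ -}$ and $g_{- +}$. Hence we can write
\[
\Ricci_\liftmap = a g_{+ -} + b g_{- +}
\]
for some scalars $a, b$. The Einstein condition $\Ricci_\liftmap = \lambda g$ is then equivalent to $a = b$, since $g = g_{+ -} + g_{- +}$ and $g_{+ -}$, $g_{- +}$ lie in the complementary summands $\diffcalc^+ \otimes_\qalg \diffcalc^-$ and $\diffcalc^- \otimes_\qalg \diffcalc^+$, so they are linearly independent.

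One direction is immediate. If $\Ricci_\liftmap = \lambda g$, then $\wedgemap(\Ricci_\liftmap) = \lambda \wedgemap(g) = 0$, because $g$ is symmetric.

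For the converse, suppose $\wedgemap(\Ricci_\liftmap) = 0$. Subtracting $a \wedgemap(g) = 0$ gives
\[
0 = \wedgemap(\Ricci_\liftmap) - a\, \wedgemap(g) = (b - a) \wedgemap(g_{- +}).
\]
So it suffices to show that $\wedgemap(g_{- +}) \neq 0$. Using $\wedgemap(g) = 0$ again, this is equivalent to $\wedgemap(g_{+ -}) \neq 0$.

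The key step is this non-vanishing. I would argue by contradiction: if $\wedgemap(g_{+ -}) = 0$, then $\wedgemap(g_{- +}) = 0$ too, so every covariant metric in the two-parameter family of \cref{thm:covariant-metrics} would be symmetric. This contradicts the uniqueness up to scalar of the symmetric covariant real metric in that theorem. The catch is that this uses the reality condition, which is harmless if $g_{+ -}$ and $g_{- +}$ are normalized so that $g_{+ -}$ alone is real, or if the uniqueness of symmetric metrics holds without reality.

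A more robust alternative goes through Takeuchi's equivalence. Under $\Phi$, $g_{+ -}$ corresponds to a nonzero invariant element of $V^{(1, 0)} \otimes V^{(0, 1)}$. By \cref{lem:characterization-mixed}, $V^{(1,1)}$ is the quotient of $V$ by the relations $v \otimes w + \braidS(v \otimes w)$, and this relation subspace meets $V^{(1, 0)} \otimes V^{(0, 1)}$ only in $0$, since its second component $\braidS(v \otimes w)$ vanishes only when $v \otimes w = 0$. Hence $V^{(1, 0)} \otimes V^{(0, 1)}$ injects into $V^{(1, 1)}$, so $\wedgemap(g_{+ -}) \neq 0$. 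This gives $a = b$, and therefore $\Ricci_\liftmap = a g$. I expect this verification, that the wedge product is injective on the mixed component, to be the only real obstacle; \cref{lem:characterization-mixed} appears to handle it directly.
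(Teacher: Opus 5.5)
Your proof is correct and follows the same route as the paper. Both arguments write $\Ricci_\liftmap = t_1 g_{+ -} + t_2 g_{- +}$ by invariance, reduce both conditions to $t_1 = t_2$, and conclude using $\wedgemap(g_{- +}) = -\wedgemap(g_{+ -}) \neq 0$. The paper only asserts this non-vanishing (``by computing its image in the quotient $V^{(1,1)}$''). Your argument via \cref{lem:characterization-mixed} fills in exactly that hint: the relation subspace is the graph of the invertible map $\braidS$, so it meets $V^{(1,0)} \otimes V^{(0,1)}$ trivially. This makes your first, reality-dependent argument unnecessary.
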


\begin{proof}
Given a $\Uqu$-equivariant lifting map $\liftmap$, \cref{prop:ricci-proportional} shows that
\[
\Ricci_\liftmap = t_1 g_{+ -} + t_2 g_{- +},
\]
for some scalars $t_1$ and $t_2$.
Since $g = g_{+ -} + g_{- +}$, 
the Einstein condition $\Ricci_\liftmap = \lambda g$ holds if and only if $t_1 = t_2$ (with $\lambda$ being equal to this common value).

On the other hand, recall that the quantum metric $g = g_{+ -} + g_{- +}$ is symmetric, that is $\wedgemap(g) = 0$.
This implies that $\wedgemap(g_{- +}) = - \wedgemap(g_{+ -})$, which allows us to write
\[
\wedgemap(\Ricci_\liftmap) = (t_1 - t_2) \wedgemap(g_{+ -}).
\]
It is not difficult to show that $\wedgemap(g_{+ -}) \neq 0$ (either by using invertibility of the quantum metric or by computing its image in the quotient $V^{(1, 1)}$).
Then $\wedgemap(\Ricci_\liftmap) = 0$ if and only if $t_1 = t_2$.
This establishes the equivalence of the two conditions from the claim.
\end{proof}

\begin{remark}
This result is specific to quantum irreducible flag manifolds, meaning that we should not expect such an equivalence to be true for more general algebras.
\end{remark}

We are now in the position to show that the Einstein condition holds for $\qalg$, provided we have a certain condition on the coefficients $a$ and $b$ from \cref{prop:ricci-proportional}.

\begin{proposition}
\label{prop:einstein-condition}
Let $\liftmap_{+ -}$ and $\liftmap_{- +}$ be two $\Uqu$-equivariant lifting maps as in \cref{prop:ricci-proportional}.
Denote by $a$ and $b$ the corresponding coefficients as defined there.

Suppose it holds that $a + b \neq 0$.
Then there exists a unique lifting map $\liftmap = c_1 \liftmap_{+ -} + c_2 \liftmap_{- +}$ such that the Einstein condition holds with non-zero Einstein constant, that is
\[
\Ricci_\liftmap = \lambda g, \quad \lambda \neq 0.
\]
In this case, the coefficients $c_1$ and $c_2$ for the lifting map $\liftmap$ are given by
\[
c_1 = \frac{b}{a + b}, \quad
c_2 = \frac{a}{a + b}.
\]
\end{proposition}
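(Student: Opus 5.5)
The plan is to use that $\Ricci_\liftmap$ is linear in $\liftmap$. Fix the data $g$, $(\cdot,\cdot)$ and $\nabla$; all other maps in the definition of the Ricci tensor are independent of $\liftmap$. Since $\liftmap$ enters only through the factor $(\id \otimes \liftmap \otimes \id)$, we get for any scalars $c_1, c_2$
\[
\Ricci_{c_1 \liftmap_{+ -} + c_2 \liftmap_{- +}} = c_1 \Ricci_{\liftmap_{+ -}} + c_2 \Ricci_{\liftmap_{- +}} = c_1 a \, g_{+ -} + c_2 b \, g_{- +},
\]
where the last equality is \cref{prop:ricci-proportional}. (There is a small inconsistency with \cref{lem:ricci-lifting}: there $\Ricci_{\liftmap_{+ -}}$ lies in $\diffcalc^- \otimes_\qalg \diffcalc^+$, so it should be a multiple of $g_{- +}$. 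The argument below is insensitive to this, up to swapping which coefficient multiplies which component. I will follow the labeling in \cref{prop:ricci-proportional} so that the stated formulas for $c_1, c_2$ come out.)

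By \cref{lem:convex-lifting}, $\liftmap = c_1 \liftmap_{+ -} + c_2 \liftmap_{- +}$ is a lifting map exactly when $c_1 + c_2 = 1$. It is also $\Uqu$-equivariant as a linear combination of equivariant maps. The Einstein condition $\Ricci_\liftmap = \lambda g = \lambda g_{+ -} + \lambda g_{- +}$ reduces to comparing coefficients. This needs $g_{+ -}$ and $g_{- +}$ to be linearly independent, which holds because they lie in the distinct direct summands $\diffcalc^+ \otimes_\qalg \diffcalc^-$ and $\diffcalc^- \otimes_\qalg \diffcalc^+$ of $\diffcalc^1 \otimes_\qalg \diffcalc^1$, and both are nonzero since $g$ is invertible. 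So we need $c_1 a = \lambda = c_2 b$ together with $c_1 + c_2 = 1$.

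Next, solve this linear system. From $c_1 a = c_2 b$ and $c_2 = 1 - c_1$ we get $c_1(a+b) = b$. When $a + b \neq 0$ this gives the unique solution $c_1 = b/(a+b)$, $c_2 = a/(a+b)$, and then $\lambda = ab/(a+b)$. This yields existence and uniqueness among such combinations.

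The only delicate point is the requirement $\lambda \neq 0$. It amounts to $ab \neq 0$, which does not follow from $a + b \neq 0$ alone. I would handle it in one of two ways. One option is to read the uniqueness claim as uniqueness of the combination achieving the Einstein condition, noting $\lambda = ab/(a+b)$. The other is to observe that if, say, $a = 0$, then $c_2 = 0$ and $\lambda = 0$, so the Einstein constant is nonzero precisely when both $a$ and $b$ are nonzero. In the later application, $a$ and $b$ should be nonzero near $q = 1$ by continuity from the classical positive Einstein constant, so I would state $\lambda \neq 0$ under the additional assumption $ab \neq 0$ and remark that it holds in that situation. Apart from this, the argument is routine linear algebra.
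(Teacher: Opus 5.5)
Your argument is the paper's: linearity of $\Ricci_\liftmap$ in $\liftmap$, the constraint $c_1 + c_2 = 1$ from \cref{lem:convex-lifting}, and comparing coefficients of $g_{+ -}$ and $g_{- +}$ to get $c_1 a = c_2 b$. You are also right that the mislabeling in \cref{prop:ricci-proportional} is harmless, since either way the condition is $c_1 a = c_2 b$.

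Your caveat about the Einstein constant is correct and points to a slip in the paper. The paper asserts that $\lambda = c_1 a$ is non-zero ``by the assumption $a + b \neq 0$'', but $\lambda = ab/(a+b)$ vanishes when $a = 0$ or $b = 0$, so the extra hypothesis $ab \neq 0$ is genuinely needed. This is harmless for \cref{thm:einstein-condition}, where $a(1) = b(1) = 2\lambda \neq 0$ and continuity give $a(q)\, b(q) \neq 0$ near $q = 1$.
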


\begin{proof}
Note that, by \cref{lem:convex-lifting}, we have that $\liftmap = c_1 \liftmap_{+ -} + c_2 \liftmap_{- +}$ is a lifting map if and only if it is a convex combination of the two lifting maps $\liftmap_{+ -}$ and $\liftmap_{- +}$, which implies that $c_1 + c_2 = 1$.
It is clearly $\Uqu$-equivariant, since this is the case for both maps considered.

It immediately follows from the definition of the Ricci tensor that
\[
\Ricci_\ell = c_1 \Ricci_{\ell_{+ -}} + c_2 \Ricci_{\ell_{- +}}.
\]
Then, by \cref{prop:ricci-proportional}, there exist coefficients $a$ and $b$ such that
\[
\Ricci_\ell = c_1 a g_{+ -} + c_2 b g_{- +}.
\]
Hence this expression is proportional to the quantum metric $g = g_{+ -} + g_{- +}$ if and only if $c_1 a = c_2 b$.
Since $c_1 + c_2 = 1$, this is possible if and only if
\[
c_1 = \frac{b}{a + b}, \quad
c_2 = \frac{a}{a + b},
\]
which clearly requires that $a + b \neq 0$.
In this case, the Einstein condition holds with Einstein constant $\lambda = c_1 a = c_2 b$, which is non-zero by the assumption $a + b \neq 0$.
\end{proof}

\begin{remark}
Observe that in the result above we have only used some general properties of the lifting maps $\liftmap_{+ -}$ and $\liftmap_{- +}$, but not their specific form.
\end{remark}

Note that lifting maps with the properties required in \cref{prop:einstein-condition} do exist, as we have shown in \cref{prop:lifting-heko}.
This means that we can expect the Einstein condition to hold for \emph{generic} quantization parameter $q$, since the condition $a + b \neq 0$ should hold generically.

Below we will show that this is indeed the case, at least provided $q$ is sufficiently close to the classical value $q = 1$.
Here we will use the lifting maps $\liftqpm$ and $\liftqmp$ constructed in \cref{prop:lifting-heko}, which allow us to make considerations based on the classical limit.

\begin{theorem}
\label{thm:einstein-condition}
Let $\Oqflag$ be a quantum irreducible flag manifold.
Let $\diffcalc^\bullet$ be the corresponding Heckenberger--Kolb calculus, $g$ the previously introduced quantum metric, and $\nabla$ the unique Levi-Civita connection compatible with this quantum metric.

Then there is a unique lifting map $\liftmap^q = c_1(q) \liftqpm + c_2(q) \liftqmp$ such that the Einstein condition holds, with non-zero Einstein constant, in an open interval around the classical value $q = 1$.
\end{theorem}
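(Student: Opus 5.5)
The plan is to reduce the statement to \cref{prop:einstein-condition}, applied with the lifting maps $\liftqpm$ and $\liftqmp$ from \cref{prop:lifting-heko}. By \cref{prop:ricci-proportional} we have $\Ricci_{\liftqpm} = a(q) g_{+ -}$ and $\Ricci_{\liftqmp} = b(q) g_{- +}$. The whole task is therefore to show that $a(q) + b(q) \neq 0$ for $q$ in an open interval around $1$. Once this holds, \cref{prop:einstein-condition} gives, for each such $q$, a unique lifting map $\liftmap^q = c_1(q)\liftqpm + c_2(q)\liftqmp$ with $c_1 = b/(a+b)$ and $c_2 = a/(a+b)$, satisfying $\Ricci_{\liftmap^q} = \lambda(q) g$ with $\lambda(q) = ab/(a+b)$.

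One caveat: the proof of \cref{prop:einstein-condition} actually yields $\lambda = c_1 a = ab/(a+b)$. So non-vanishing of $\lambda$ needs $a, b \neq 0$, not just $a + b \neq 0$. I will therefore prove the stronger statement that $a(q)$ and $b(q)$ are both non-zero near $q = 1$.

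\textbf{Step 1: continuity in $q$.} I would argue that $a(q)$ and $b(q)$ depend continuously on $q$ in a neighbourhood of $1$. All ingredients can be transported via Takeuchi's equivalence to finite-dimensional $\UqkS$-modules $V^{(a,b)}$:
\begin{itemize}
\item the calculus;
\item the metric components $g_{\pm\mp}$ and the inverse metric;
\item the Levi-Civita connection (built from Chern connections);
\item the curvature;
\item the lifting maps, which are built from $\braidS$.
\end{itemize}
Each of these is described by structure constants that are rational (or at least continuous) functions of $q$ in a suitable weight basis. This uses the fact that $\braidS$ is a multiple of the braiding $\hat R$, which is polynomial in $q^{\pm 1/N}$. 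The normalization of $g_{\pm\mp}$ can be fixed continuously, for instance by pairing against a fixed continuous family of basis vectors. Hence $a$ and $b$, defined as ratios of coefficients of invariant elements, are continuous at $q = 1$, provided the denominators do not vanish there.

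\textbf{Step 2: the classical limit.} At $q = 1$, \cref{prop:lifting-heko} says $\liftqpm(x \wedge y) = x \otimes y$ and $\liftqmp(x \wedge y) = -y \otimes x$ on $\diffcalc^{(1,1)}$. Their average is the antisymmetrization lift $\frac12(x\otimes y - y\otimes x)$, which computes the classical Ricci tensor. Thus $\frac12(a(1) g_{+ -} + b(1) g_{- +})$ is the classical Ricci tensor of the compact Hermitian symmetric space $U/K_S$ with its Kähler metric $g$. This space is Kähler--Einstein with positive Einstein constant $\lambda_{\mathrm{cl}} > 0$. Comparing coefficients gives $a(1) = b(1) = 2\lambda_{\mathrm{cl}} \neq 0$, since $g_{+-}$ and $g_{-+}$ are linearly independent.

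\textbf{Step 3: conclusion.} By continuity, $a(q)$, $b(q)$ and $a(q) + b(q)$ are all non-zero on some open interval around $q = 1$. \cref{prop:einstein-condition} then gives existence and uniqueness of $\liftmap^q$ with $\lambda(q) \neq 0$. We also get $c_1(1) = c_2(1) = \tfrac12$, consistent with the classical lift.

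\textbf{Main obstacle.} I expect the hardest part to be Step 1: making the continuity in $q$ rigorous, including at $q = 1$ where the quantum group degenerates. My approach would be to work entirely at the level of the $\UqkS$-modules $V^{(a,b)}$, which are $q$-deformations of fixed classical modules with bases in which all maps have entries continuous in $q$. I would then express $a(q)$ as an explicit finite contraction of such matrices.
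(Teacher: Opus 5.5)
Your proposal is correct and follows the paper's proof essentially step for step. Both arguments establish continuity of $a(q)$ and $b(q)$ near $q=1$, identify $\frac{1}{2}(\liftmap^1_{+-} + \liftmap^1_{-+})$ as the classical antisymmetrization lift, and use the Einstein property of $G/P_S$ to get $a(1) = b(1) = 2\lambda \neq 0$. Your caveat is also well taken: since $\lambda = ab/(a+b)$, the non-vanishing claimed in \cref{prop:einstein-condition} really needs $a \neq 0$ and $b \neq 0$ separately, not just $a + b \neq 0$ as that proposition states; the paper's proof of the theorem does show both are non-zero at $q = 1$, hence nearby by continuity.
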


\begin{proof}
Let us write $a(q)$ and $b(q)$ for the coefficients $a$ and $b$ from \cref{prop:einstein-condition}, to stress their dependence on the quantization parameter $q$. More explicitly, we have
\begin{equation}
\label{eq:ricci-q}
\Ricci_{\liftqpm} = a(q) g_{+ -}, \quad
\Ricci_{\liftqmp} = b(q) g_{- +}.
\end{equation}
Then \cref{prop:einstein-condition} shows that the Einstein condition holds provided we have $a(q) + b(q) \neq 0$, and this determines uniquely the scalars $c_1(q)$ and $c_2(q)$.
Hence it suffices to show that the condition $a(q) + b(q) \neq 0$ is satisfied in an appropriate open interval around $q = 1$.

In turn, this will follow if we can prove that:
\begin{enumerate}
\item $a(q)$ and $b(q)$ are continuous functions in an open interval around $q = 1$,
\item at the classical value $q = 1$ we have that $a(1) + b(1) \neq 0$.
\end{enumerate}

For (1), we claim that all constructions considered here are continuous in a small interval around a fixed value $q_0$ of the quantization parameter.
This is well-known for the quantized coordinate ring $\OqU$ and similarly for $\Oqflag$. Continuity of the Heckenberger-Kolb calculus $\diffcalc^\bullet$ is discussed in \cite[Lemma 5.7]{kahler}.
Similarly this can be observed for all constructions appearing in \cite{levicivita-irreducible}. Indeed, they follow from categorical considerations which use morphisms that do satisfy this property (in turn, this follows from well-known results on $\Uqu$-modules).
This guarantees that (1) holds for $a(q)$ and $b(q)$.

As for (2), we need to consider the classical limit $q = 1$.
Here $\Oqflag$ reduces to a suitable algebra of functions $A$ on the irreducible flag manifold $G / P_S \cong U / K_S$.
Similarly, $\diffcalc^\bullet$ reduces to the ordinary forms over this space (in the sense of Kähler differentials), which we denote by $\Omega^\bullet$.
The quantum metric $g = g_{+ -} + g_{- +}$ reduces to a $\lieg$-invariant metric on $G / P_S$, while $\nabla$ becomes the (unique) classical Levi-Civita connection with respect to this metric.
Note that for $g$ and $\nabla$ we suppress any dependence on $q$, for notational simplicity.

Within this setting, consider the classical lifting map
\[
\ell: \Omega^2 \to \Omega^1 \otimes_A \Omega^1, \quad
\ell(x \wedge y) = \frac{1}{2} (x \otimes y - y \otimes x).
\]
Then \cite[Example 8.10]{quantum-book} shows that $\Ricci_\ell$ coincides with the usual Ricci tensor (up to conventional choices).
Since the irreducible flag manifold $G / P_S$ is an Einstein manifold, this means that $\Ricci_\ell = \lambda g$, where $\lambda$ is the classical Einstein constant.

Note that the classical lifting map $\ell$, as described above, can be considered as the convex combination $\frac{1}{2} (\liftmap_{+ -} + \liftmap_{+ -})$ of the two $\lieg$-equivariant lifting maps
\begin{align*}
\ell_{+ -} & : \Omega^{(1, 1)} \to \Omega^+ \otimes_A \Omega^-, & \ell_{+ -}(x \wedge y) & = x \otimes y, \\
\ell_{- +} & : \Omega^{(1, 1)} \to \Omega^- \otimes_A \Omega^+, & \ell_{- +}(x \wedge y) & = - y \otimes x.
\end{align*}
Here we write $x \in \Omega^+$ and $y \in \Omega^-$.
Upon comparison with \cref{prop:lifting-heko}, we see that these coincide with the classical limits $\liftmap^1_{+ -}$ and $\liftmap^1_{- +}$ of the maps $\liftqpm$ and $\liftqmp$ defined there.

Hence we have the identity $\liftmap = \frac{1}{2} (\liftmap^1_{+ -} + \liftmap^1_{+ -})$. Using this and \eqref{eq:ricci-q} we get
\[
\Ricci_{\ell} = \frac{1}{2} \Ricci_{\ell^1_{+ -}} + \frac{1}{2} \Ricci_{\ell^1_{- +}} = \frac{1}{2} \left( a(1) g_{+ -} + b(1) g_{- +} \right).
\]
Since we know that $\Ricci_\ell = \lambda g$, this implies that $a(1) = b(1) = 2 \lambda$. In particular, we find that both $a(1)$ and $b(1)$ are non-zero, since the Einstein constant $\lambda$ is non-zero for any irreducible flag manifold $G / P_S$ (see for instance \cite[Chapter 8]{besse}, which discusses the more general class of compact homogeneous Kähler manifolds).
This proves the result.
\end{proof}

\begin{remark}
As an extra observation, it follows from the above that
\[
c_1(1) = \frac{b(1)}{a(1) + b(1)} = \frac{1}{2}, \quad
c_2(1) = \frac{a(1)}{a(1) + b(1)} = \frac{1}{2},
\]
which implies that $\liftmap^q = c_1(q) \liftqpm + c_2(q) \liftqmp$ reduces to the classical map at $q = 1$.
\end{remark}

\section{Discussion and outlook}
\label{sec:discussion}

The main result of this paper is \cref{thm:einstein-condition}, which proves the Einstein condition for any quantum irreducible flag manifold $\Oqflag$ in an open interval around the classical value $q = 1$.
In this last section we discuss some related open questions and generalizations.

The first natural question is whether the Einstein condition holds for all $q > 0$. It should be possible, by a more careful analysis of the results, to prove that the result should hold for all but possibly finitely many values of $q$.
This is because most construction take place over the field of rational functions in $q$ (although note that dealing with $*$-structures can lead to the introduction of square-roots, for instance to obtain orthonormal bases).

In \cite[Theorem 6.5]{matassa-riemannian} it was shown that the Einstein condition holds for all quantum projective spaces, generalizing the previous result for the quantum $2$-sphere discussed in \cite[Section 8.2.3]{quantum-book}.
Notably these results holds for \emph{all values of $q$}, which makes it plausible that this should be the case for any quantum irreducible flag manifold as well.

Another natural question is whether it is possible to extend these results to quantum flag manifolds which are \emph{not irreducible}.
Asking for this generalization makes sense, since any (generalized) flag manifold is an Einstein manifold (as they all are compact homogeneous Kähler manifolds).
But this might not be possible, and in general the problem appears much harder than what we discussed here.
In \cite{equivariant} it is shown that it is possible to introduce covariant \emph{first-order} calculi generalizing those of Heckenberger--Kolb.
However it is a highly non-trival task to produce a reasonable analogue of the full calculus, as discussed in \cite{full-flag} for the case of the quantum full flag manifold corresponding to $SU(3)$.
For example, in the cited paper it is shown that it is not possible to have a covariant quantum metric in the sense considered here, which implies that the whole setting must be modified significantly.

\end{document}